\providecommand{\U}[1]{\protect\rule{.1in}{.1in}}
\providecommand{\U}[1]{\protect\rule{.1in}{.1in}}
\newtheorem{theorem}{Theorem}[section]
\newtheorem{proposition}[theorem]{Proposition}
\newtheorem{corollary}[theorem]{Corollary}
\newtheorem{remark}[theorem]{Remark}
\newtheorem{lemma}[theorem]{Lemma}
\numberwithin{equation}{section}
\begin{document}
\title[Unbounded multilinear operators between Banach spaces]{On the size of the set of unbounded multilinear operators between Banach spaces}

\begin{abstract}
Among other results we investigate $\left(  \alpha,\beta\right)  $-lineability
of the set of non-continuous $m$-linear operators defined between  normed spaces as a subset of the space of all $m$-linear operators. We also give a partial answer to an open
problem on the lineability of the set of non absolutely
summing operators.

\end{abstract}
\author{V.V. F\'{a}varo, D.M. Pellegrino, P. Rueda}
\address{V.V. F\'{a}varo\\
	Faculdade de Matem\'atica\\
	Universidade Federal de Uberlandia\\
	38.400-902 - Brazil\\
	\newline email: vvfavaro@gmail.com}
\address{D. Pellegrino \\
	Departamento de Matem\'atica\\
	Universidade Federal da Para\'{\i}ba \\
	Brazil\\
    \newline email: dmpellegrino@gmail.com}
\address{P. Rueda\\
	Departamento de An\'alisis Matem\'atico\\
	Universitat de Val\`encia\\
	C/ Dr. Moliner 50, 46100 Burjassot (Valencia). Spain\\
\newline email: pilar.rueda@uv.es}

\thanks{Vin\'icius F\'{a}varo is supported by FAPEMIG Grant PPM-00217-18; and CNPq Grant 310500/2017-6.\\ D. Pellegrino is supported by CNPq and Grant 2019/0014 Paraiba State Research Foundation (FAPESQ).\\ P. Rueda  is supported by the Ministerio de Econom\'{\i}a, Industria y Competitividad and FEDER under project MTM2016-77054-C2-1-P}
\subjclass[2010]{Primary 15A03; Secondary 46G25 46E10 47H60}
\keywords{lineability, multilinear operators}
\maketitle

\section{Introduction}

Given any infinite-dimensional normed space $E$ and any normed space $F$, it
is well-known and easy to show that there exist linear operators from $E$ to
$F$ that fail to be continuous. Let us denote the set of such discontinuous
operators by $\mathcal{NL}\left(  E;F\right)  ;$ it is obvious that it is not
a vector space but what can be said about the size of this set? The word
\textquotedblleft size\textquotedblright\ is not precise and in this paper we
are interested in the linear viewpoint: is there any linear structure inside
$\mathcal{NL}\left(  E;F\right)  $? This line of investigation is what we call
\textquotedblleft lineability\textquotedblright. If $V$ is a vector space and
$\beta$ is a cardinal number, a subset $A$ of $V$ is called \emph{$\beta
\text{-lineable}$} in $V$ if $A\cup\left\{  0\right\}  $ contains a $\beta
$-dimensional linear subspace of $V$. This notion was introduced in the
classical paper \cite{Aron} by Aron, Gurariy, and Seoane-Sep\'{u}lveda (see
also \cite{book2}) and since then it has been successfully investigated in
different settings.

All along this paper $E,F$ are normed spaces over the real scalar-field
$\mathbb{R}$ and the dimension of a vector space $V$ is denoted by $\dim V;$
also $\aleph_{0}$ denotes the first infinite cardinal number and
$\mathfrak{c}$ denotes the continuum. According to \cite{da}, if $\alpha
,\beta$ are cardinal numbers and $\alpha<\beta,$ a subset $A$ of a vector
space $V$ is \emph{$\left(  \alpha,\beta\right)  $-lineable} in $V$ if $A$ is
$\alpha$-lineable in $V$ and for every subspace $W_{\alpha}\subset V$ with
$W_{\alpha}\subset A\cup\left\{  0\right\}  $ and $\dim W_{\alpha}=\alpha$,
there is a subspace $W_{\beta}\subset V$ with $\dim W_{\beta}=\beta$ and
$W_{\alpha}\subset W_{\beta}\subset A\cup\left\{  0\right\}  $. Of course,
$\left(  0,\beta\right)  $-lineable is the same of $\beta$-lineable. This
notion is quite more restrictive than the original notion of lineability and,
in general, techniques proving $\beta$-lineability are useless to prove
$\left(  \alpha,\beta\right)  $-lineability. For instance, in \cite{da} the
authors prove that%
\[
L_{p}[0,1]\setminus%
%TCIMACRO{\tbigcup \limits_{q>p}}%
%BeginExpansion
{\textstyle\bigcup\limits_{q>p}}
%EndExpansion
L_{q}[0,1]
\]
is $\left(  1,\mathfrak{c}\right)  $-lineable, and the case of $\left(
a,\mathfrak{c}\right)  $-lineability for $a>1$ remains open.

We denote by $L(^{m}E;F)$ the vector space of all $m$-linear operators from
$E\times\cdots\times E$ to $F$, $\mathcal{L}(^{m}E;F)$ is the vector subspace
of $L(^{m}E;F)$ of all continuous $m$-linear operators and $\mathcal{NL}%
\left(  ^{m}E;F\right)  $ is the set of all noncontinuous $m$-linear operators
from $E\times\cdots\times E$ to $F$.

The $2^{\dim E}$-lineability of $\mathcal{NL}\left(  ^{m}E;F\right)  $ as a
subset of $L(^{m}E;F)$ was proven in \cite{gamez} when $F=\mathbb{R}$. In the
present paper we investigate the stronger notion of $\left(  \alpha
,\beta\right)  $-lineability in this setting, also considering $F\neq
\mathbb{R}$. One of our main results asserts that $\mathcal{NL}\left(
^{m}E;\mathbb{R}\right)  $ is $\left(  n,2^{\dim E}\right)  $-lineable for all
positive integers $n$. We also investigate vector valued multilinear mappings and prove that when $\mathfrak{c}<\dim E<\dim F$,
the set $\mathcal{NL}\left(  ^{m}E;F\right)  $ is $\left(  \tau,2^{\dim
E}\right)  $-lineable for every $\tau<\dim E$. The techniques used along the
proof of the aforementioned result are also used to give a partial answer to a
problem posed in \cite{diogo} on the lineability of the set of non absolutely
summing operators. We prove that, for  an infinite-dimensional Banach space $E$ if the set  $\mathcal{L}\left(  E;\ell_{2}(\Gamma)\right)  \setminus\Pi_{1}\left(
E;\ell_{2}(\Gamma)\right)  $ is non-void, then it is $\left(  \alpha,card(\Gamma)\right)
$-lineable for all $\alpha<card(\Gamma)$, whenever $card\left(  \Gamma\right)  >\mathfrak{c}$ and  $\dim E<card\left(
\Gamma\right)  $. Here $\Pi_{1}\left(
E;\ell_{2}(\Gamma)\right)$ denotes the space of all  absolutely
summing operators from $E$ to $\ell_{2}(\Gamma)$.

\section{Lineability of the set of unbounded multilinear operators}

We begin by recalling the following result:

\begin{theorem}
\label{NL_lineable} [see (\cite{gamez})]\label{rz}Let $E$ be an
infinite-dimensional normed space. The set $\mathcal{NL}\left(  ^{m}%
E;\mathbb{R}\right)  $ is $2^{\dim E}$-lineable.
\end{theorem}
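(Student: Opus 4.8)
The plan is to reduce the statement to a purely sequence-space construction and then build the required subspace explicitly. Write $\kappa=\dim E$ and fix a Hamel basis $(e_i)_{i\in I}$ of $E$ with $\|e_i\|=1$ and $|I|=\kappa$. Since an $m$-linear form is determined freely by its values on basis $m$-tuples, the map $T\mapsto \bigl(T(e_{i_1},\dots,e_{i_m})\bigr)_{(i_1,\dots,i_m)\in I^m}$ is a linear isomorphism of $L(^{m}E;\mathbb{R})$ onto $\mathbb{R}^{I^m}$. If $T$ is continuous then $|T(e_{i_1},\dots,e_{i_m})|\le\|T\|$ for all tuples, so its associated array is bounded; equivalently, any $T$ whose array is unbounded must be discontinuous. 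Setting $J=I^m$, note $|J|=\kappa^m=\kappa$, so it suffices to produce a subspace $W_0\subseteq\mathbb{R}^J$ with $\dim W_0=2^\kappa$ and $W_0\cap\ell_\infty(J)=\{0\}$: pulling $W_0$ back through the isomorphism yields a subspace $W\subseteq L(^{m}E;\mathbb{R})$ of dimension $2^{\dim E}$ all of whose nonzero elements have unbounded arrays, hence lie in $\mathcal{NL}(^{m}E;\mathbb{R})$.

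The core of the argument is therefore the following claim: for any infinite set $J$ there is a $2^{|J|}$-dimensional subspace of $\mathbb{R}^J$ whose nonzero vectors are all unbounded. To prove it I would identify $J$ with $\kappa\times\mathbb{N}$ (legitimate since $\kappa\cdot\aleph_0=\kappa$) and invoke the Fichtenholz--Kantorovich--Hausdorff theorem to fix an independent family $\{A_i:i\in\mathcal{I}\}\subseteq\mathcal{P}(\kappa)$ with $|\mathcal{I}|=2^\kappa$; recall that independence means $\bigcap_{i\in F}A_i\setminus\bigcup_{i\in G}A_i\ne\varnothing$ for all disjoint finite $F,G\subseteq\mathcal{I}$. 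For each $i$ define $F_i\in\mathbb{R}^{\kappa\times\mathbb{N}}$ by $F_i(\gamma,n)=\chi_{A_i}(\gamma)\,2^n$, and let $W_0=\mathrm{span}\{F_i:i\in\mathcal{I}\}$.

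It remains to check that every nontrivial finite combination $h=\sum_{s=1}^r c_sF_{i_s}$ (with the $i_s$ distinct and some $c_s\neq0$) is unbounded. Choosing $s_0$ with $c_{s_0}\ne0$ and applying independence to $F=\{i_{s_0}\}$ and $G=\{i_s:s\ne s_0\}$ produces $\gamma\in A_{i_{s_0}}$ with $\gamma\notin A_{i_s}$ for $s\neq s_0$; on the fibre $\{\gamma\}\times\mathbb{N}$ one then has $h(\gamma,n)=c_{s_0}2^n$, which is unbounded in $n$. Hence $h\notin\ell_\infty(J)$, which simultaneously shows that the $F_i$ are linearly independent and that $W_0\cap\ell_\infty(J)=\{0\}$; since $|\mathcal{I}|=2^\kappa$ we obtain $\dim W_0=2^\kappa$, and the reduction of the first paragraph then delivers the desired $2^{\dim E}$-dimensional subspace of $\mathcal{NL}(^{m}E;\mathbb{R})\cup\{0\}$.

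I expect the genuine obstacle to be the crux claim rather than the bookkeeping: for $\kappa=\aleph_0$ one can replace the independent family by an explicit family of geometric sequences $(b^n)_n$ with distinct bases $b>1$, but securing $2^{\dim E}$ many independent unbounded directions for arbitrary $\kappa$ is exactly what forces the use of a large independent family, and it is also what makes the construction work uniformly in $\kappa$ within ZFC (no appeal to the precise value of $\dim\mathbb{R}^J$ or of $\dim\mathcal{L}(^{m}E;\mathbb{R})$ is needed, which is fortunate since those dimensions can coincide with $2^{\dim E}$). A minor point to verify carefully is that the assignment $T\mapsto\text{array}$ is onto $\mathbb{R}^{I^m}$, i.e. that arbitrary arrays extend to $m$-linear forms, which is immediate from multilinear extension off a Hamel basis.
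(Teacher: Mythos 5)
The paper does not actually prove this statement: it is recalled verbatim from \cite{gamez}, so there is no in-text argument to compare yours against. Your proof is correct and self-contained. The reduction is sound: with a normalized Hamel basis $(e_i)_{i\in I}$, the map $T\mapsto\bigl(T(e_{i_1},\dots,e_{i_m})\bigr)$ identifies $L(^mE;\mathbb{R})$ with $\mathbb{R}^{I^m}$ and sends continuous forms into $\ell_\infty(I^m)$, so it suffices to produce a $2^{\kappa}$-dimensional subspace of $\mathbb{R}^{J}$, $|J|=\kappa^m=\kappa$, meeting $\ell_\infty(J)$ only at $0$. Your construction does exactly that: identifying $J$ with $\kappa\times\mathbb{N}$, taking an independent family $\{A_i\}$ of cardinality $2^{\kappa}$ on $\kappa$ (Fichtenholz--Kantorovich--Hausdorff) and weighting by $2^n$ along the $\mathbb{N}$-fibres, the independence property isolates a fibre on which a nontrivial combination reduces to $c_{s_0}2^n$, which yields unboundedness and hence simultaneously linear independence and avoidance of $\ell_\infty(J)$. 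This is in the same spirit as the construction in the cited source (a large linearly independent family of everywhere-unbounded arrays), and it has the virtue of requiring no appeal to the Erd\H{o}s--Kaplansky computation of $\dim\mathbb{R}^{J}$; the only nontrivial set-theoretic input is the existence of independent families of size $2^{\kappa}$, which holds in ZFC. The two minor points you flag (surjectivity of the array map onto $\mathbb{R}^{I^m}$, and $\kappa\cdot\aleph_0=\kappa$) are indeed immediate.
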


Let us recall some fundamental results of cardinal arithmetic that will be
used several times along this paper: if $2\leq\mathfrak{b}\leq2^{\mathfrak{a}%
},$ then
\[
\mathfrak{b}^{\mathfrak{a}}=2^{\mathfrak{a}}.
\]
It is also worth remembering that the dimension of any infinite-dimensional
Banach space is at least $\mathfrak{c}$ (this does not depend on the Continuum
Hypothesis - see \cite{lacey}) and a simple consequence is that for any
infinite-dimensional Banach space $E$ we have
\begin{equation}
card(E)=\dim(E),\label{cardi}%
\end{equation}
where $card(E)$ denotes the cardinality of the set $E.$ In fact, since
$\dim(E)\geq\mathfrak{c}$, an immediate calculation shows (\ref{cardi}). We
finally recall that $\mathcal{L}\left(  ^{m}E;F\right)  $ is a Banach space
whenever $E$ is a normed space and $F$ is a Banach space.

Note that if $E$ is infinite-dimensional, then
\[
card\left(  L\left(  ^{m}E;F\right)  \right)  =\dim\left(  L\left(
^{m}E;F\right)  \right)  =\left(  cardF\right)  ^{\dim E}.
\]
In fact, since any $m$-linear operator $T$ is fully defined by its evaluation
in the basis of $E$, we have%
\[
card\left(  L\left(  ^{m}E;F\right)  \right)  =\left(  cardF\right)  ^{\left(
\dim E\right)  ^{m}}=\left(  cardF\right)  ^{\dim E},
\]
and since $\dim L\left(  ^{m}E;\mathbb{R}\right)  \geq\mathfrak{c}$, we have
\[
\dim L\left(  ^{m}E;\mathbb{R}\right)  =card\left(  L\left(  ^{m}%
E;\mathbb{R}\right)  \right)
\]
and an immediate consequence is that Theorem \ref{rz} is sharp in the sense
that the dimension $2^{\dim E}$ cannot be improved, because%
\[
\dim L\left(  ^{m}E;\mathbb{R}\right)  =card\left(  L\left(  ^{m}%
E;\mathbb{R}\right)  \right)  =\left(  card\left(  \mathbb{R}\right)  \right)
^{\left(  \dim E\right)  ^{m}}=\mathfrak{c}^{\dim E}=2^{\dim E},
\]
where the last equality holds because $\dim E\geq\mathfrak{c}$ for all Banach
spaces. The following result is an immediate corollary of Theorem
\ref{NL_lineable}, but since the argument of its proof will be used later, we
prefer to do the details.

\begin{corollary}
\label{r2}Let $E$ be an infinite-dimensional normed space and $F$ be a normed
space. The set $\mathcal{NL}\left(  ^{m}E;F\right)  $ is $\left(
\max\{2^{\dim E},\dim F\}\right)  $-lineable.
\end{corollary}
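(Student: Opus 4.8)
The plan is to reduce everything to the scalar case supplied by Theorem \ref{rz} and then transport it into $L(^mE;F)$ by two complementary ``tensoring'' embeddings. By Theorem \ref{rz} there is a subspace $W\subseteq\mathcal{NL}(^mE;\mathbb{R})\cup\{0\}$ with $\dim W=2^{\dim E}$, every nonzero element of which is a discontinuous scalar $m$-linear form. The whole argument then rests on one elementary observation, proved via Hahn--Banach: tensoring a scalar form with a fixed nonzero vector, or a fixed nonzero form with a vector, neither creates nor destroys continuity.

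First I would produce a subspace of dimension $2^{\dim E}$. Fix a nonzero $y_0\in F$ and consider the linear map $\Phi\colon L(^mE;\mathbb{R})\to L(^mE;F)$ sending $\varphi$ to the operator $T_\varphi$ given by $T_\varphi(x_1,\dots,x_m)=\varphi(x_1,\dots,x_m)\,y_0$. Since $y_0\neq0$, the map $\Phi$ is injective. The key point is that $T_\varphi$ is continuous if and only if $\varphi$ is: choosing by Hahn--Banach a continuous $\lambda\in F^{*}$ with $\lambda(y_0)=1$, one has $\lambda\circ T_\varphi=\varphi$, so continuity of $T_\varphi$ forces continuity of $\varphi$, while the converse is immediate. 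Hence $\Phi(W)$ is a subspace of $L(^mE;F)$ of dimension $2^{\dim E}$ all of whose nonzero elements lie in $\mathcal{NL}(^mE;F)$.

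Next I would produce a subspace of dimension $\dim F$. Fix any nonzero (hence discontinuous) $\varphi_0\in W$ and consider $\Psi\colon F\to L(^mE;F)$ sending $y$ to $S_y$, where $S_y(x_1,\dots,x_m)=\varphi_0(x_1,\dots,x_m)\,y$. As $\varphi_0\neq0$, $\Psi$ is injective, so $\dim\Psi(F)=\dim F$; and for $y\neq0$ the same extraction (pick $\lambda\in F^{*}$ with $\lambda(y)=1$, so that $\lambda\circ S_y=\varphi_0$) shows $S_y$ is discontinuous. Thus $\Psi(F)$ is a subspace of dimension $\dim F$ contained, apart from $0$, in $\mathcal{NL}(^mE;F)$. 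One of the two subspaces $\Phi(W)$ and $\Psi(F)$ has dimension exactly $\max\{2^{\dim E},\dim F\}$, which already yields the asserted lineability.

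There is no serious obstacle; the only point needing care is the continuity-preservation statement, and that is precisely what the functional-extraction argument guarantees. If a single witnessing subspace is preferred, one may instead take $\Phi(W)+\Psi(F)$, whose dimension is $2^{\dim E}+\dim F-1$ (the two summands meeting only along the line through $\varphi_0(\cdot)y_0$) and whose nonzero elements are again all discontinuous: writing such an element as $R(x)=\varphi(x)y_0+\varphi_0(x)y$ with $\varphi\in W$, one treats the case $y\in\mathrm{span}\{y_0\}$ by absorbing it into $W$, and the case $y\notin\mathrm{span}\{y_0\}$ by choosing $\mu,\nu\in F^{*}$ with $\mu(y_0)=1,\ \mu(y)=0$ and $\nu(y_0)=0,\ \nu(y)=1$, whence $\nu\circ R=\varphi_0$ is discontinuous. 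This ``tensor with a fixed datum, then recover by Hahn--Banach'' scheme is the reusable core that will resurface in the vector-valued and non-absolutely-summing settings.
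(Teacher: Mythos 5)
Your proposal is correct and follows essentially the same route as the paper: the map $\varphi\mapsto\varphi(\cdot)\,y_0$ applied to a $2^{\dim E}$-dimensional subspace of $\mathcal{NL}(^mE;\mathbb{R})$, and the map $y\mapsto\varphi_0(\cdot)\,y$ applied to all of $F$, are exactly the two constructions in the paper's proof of Corollary \ref{r2}. Your explicit Hahn--Banach justification of the continuity-preservation step (which the paper leaves implicit) and the optional single subspace $\Phi(W)+\Psi(F)$ are welcome refinements but do not change the argument.
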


\begin{proof}
Let $\{T_{\alpha}:\alpha\in\Delta\}$ be a LI subset of $\mathcal{NL}\left(
^{m}E;\mathbb{R}\right)  $ with%
\[
span\{T_{\alpha}:\alpha\in\Delta\}\subset\mathcal{NL}\left(  ^{m}%
E;\mathbb{R}\right)  \cup\{0\}
\]
and
\[
card\left(  \Delta\right)  =2^{\dim E}.
\]
Let $v\in F$ be non null. Define%
\[
\widetilde{T_{\alpha,v}}\in\mathcal{NL}\left(  ^{m}E;F\right)
\]
by%
\[
\widetilde{T_{\alpha,v}}(x)=T_{\alpha}(x)v.
\]
Note that $\{\widetilde{T_{\alpha,v}}:\alpha\in\Delta\}$ is LI and
\[
span\{\widetilde{T_{\alpha,v}}:\alpha\in\Delta\}\subset\mathcal{NL}\left(
^{m}E;F\right)  \cup\{0\}.
\]
We have shown that $\mathcal{NL}\left(  ^{m}E;F\right)  $ is $2^{\dim E}%
$-lineable. Suppose now that $\dim F>2^{\dim E};$ in this case $\max\{2^{\dim
E},\dim F\}=\dim F$ and we proceed as follows. Choose a basis $\left\{
v_{\lambda}:\lambda\in\Gamma\right\}  $ of $F$ and $T\in\mathcal{NL}\left(
^{m}E;\mathbb{R}\right)  ;$ define%
\begin{align*}
T_{\lambda}  &  :E\times\cdots\times E\rightarrow F\\
T_{\lambda}\left(  x_{1},...,x_{m}\right)   &  =T\left(  x_{1},...,x_{m}%
\right)  v_{\lambda}.
\end{align*}
It is easy to verify that $span\left\{  T_{\lambda}:\lambda\in\Gamma\right\}
$ is contained in $\mathcal{NL}\left(  ^{m}E;F\right)  \cup\{0\}$ and has
dimension $\dim F$.
\end{proof}

Recall that a subset $A$ of a vector space $V$ is called \emph{maximal
lineable} in $V$ if $A$ is $(\dim V)$-lineable.

If $\dim F\leq2^{\dim E}$, then we have
\[
\max\{2^{\dim E},\dim F\}=2^{\dim E}=\left(  \dim F\right)  ^{\dim E}=\dim
L\left(  ^{m}E;F\right)
\]
and thus we have the following consequence:

\begin{corollary}
If $E,F$ are infinite dimensional Banach spaces, with $\dim F\leq2^{\dim E}$,
then the set $\mathcal{NL}\left(  ^{m}E;F\right)  $ is maximal lineable.
\end{corollary}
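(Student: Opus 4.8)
The plan is to deduce this directly from Corollary~\ref{r2} together with the cardinal computation displayed immediately before the statement. First I would recall that, by definition, maximal lineability of $\mathcal{NL}(^mE;F)$ in $L(^mE;F)$ means exactly $(\dim L(^mE;F))$-lineability, so it suffices to identify the cardinal $\dim L(^mE;F)$ with the cardinal $\max\{2^{\dim E},\dim F\}$ furnished by Corollary~\ref{r2}.

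Next I would compute $\dim L(^mE;F)$. Since $E$ is infinite-dimensional, the observation preceding Corollary~\ref{r2} gives $\dim L(^mE;F)=(card(F))^{\dim E}$, and because $F$ is an infinite-dimensional Banach space, identity (\ref{cardi}) yields $card(F)=\dim F$; hence $\dim L(^mE;F)=(\dim F)^{\dim E}$. The key step is then the cardinal arithmetic fact recalled in the paper: if $2\leq\mathfrak{b}\leq2^{\mathfrak{a}}$, then $\mathfrak{b}^{\mathfrak{a}}=2^{\mathfrak{a}}$. Applying it with $\mathfrak{a}=\dim E$ and $\mathfrak{b}=\dim F$ — which is legitimate because $F$ being an infinite-dimensional Banach space forces $\dim F\geq\mathfrak{c}\geq2$, while the hypothesis gives $\dim F\leq2^{\dim E}$ — I obtain $(\dim F)^{\dim E}=2^{\dim E}$, so that $\dim L(^mE;F)=2^{\dim E}$.

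Finally, the hypothesis $\dim F\leq2^{\dim E}$ gives $\max\{2^{\dim E},\dim F\}=2^{\dim E}=\dim L(^mE;F)$. Corollary~\ref{r2} then says that $\mathcal{NL}(^mE;F)$ is $(2^{\dim E})$-lineable, i.e. $(\dim L(^mE;F))$-lineable, which is precisely maximal lineability, completing the argument. There is no genuine obstacle here, as the result is an immediate consequence of the preceding material; the only points requiring a word of care are the verification that the hypotheses of the cardinal arithmetic lemma hold (in particular the lower bound $\dim F\geq2$ coming from the infinite-dimensionality of $F$) and the use of (\ref{cardi}) to pass between the cardinality and the dimension of $F$.
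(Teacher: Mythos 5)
Your proposal is correct and follows essentially the same route as the paper: the authors likewise combine Corollary~\ref{r2} with the computation $\max\{2^{\dim E},\dim F\}=2^{\dim E}=(\dim F)^{\dim E}=\dim L(^{m}E;F)$, which rests on the same cardinal arithmetic fact and on identity (\ref{cardi}). Your added checks (that $\dim F\geq 2$ and that $card(F)=\dim F$) are exactly the points the paper leaves implicit.
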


Our main goal is to prove stronger lineability properties of $\mathcal{NL}%
\left(  ^{m}E;F\right)  $. If $E$ is a normed space we recall that there is a
normed space $\otimes_{\pi}^{m}E$ (the tensor product with the projective
tensor norm) such that $L\left(  ^{m}E;F\right)  $ is isomorphic to $L\left(
\otimes_{\pi}^{m}E;F\right)  $ and preserves continuity. Let us denote this
isomorphism by%
\[
\Psi:L\left(  \otimes_{\pi}^{m}E;F\right)  \rightarrow L\left(  ^{m}%
E;F\right)
\]
and the restriction of $\Psi$ to $\mathcal{L}\left(  \otimes_{\pi}%
^{m}E;F\right)  $, still denoted by $\Psi$, is an isometric isomorphism
\[
\Psi:\mathcal{L}\left(  \otimes_{\pi}^{m}E;F\right)  \rightarrow
\mathcal{L}\left(  ^{m}E;F\right)  .
\]
We refer the interested reader to the monograph \cite{ryan} for details on
tensor products of Banach spaces. Now we prove some simple \textit{lemmata} to
prove our first main theorem.

\begin{lemma}
\label{787}$\mathcal{NL}\left(  ^{m}E;F\right)  $ is $\left(  \tau
,\alpha\right)  $-lineable in $L\left(  ^{m}E;F\right)  $ if and only if
$\mathcal{NL}\left(  \otimes_{\pi}^{m}E;F\right)  $ is $\left(  \tau
,\alpha\right)  $-lineable in $L\left(  \otimes_{\pi}^{m}E;F\right)  $.
\end{lemma}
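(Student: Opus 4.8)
The plan is to reduce everything to the single fact, already available before the lemma, that $\Psi\colon L(\otimes_\pi^m E;F)\to L(^m E;F)$ is a linear isomorphism of the full spaces whose restriction is an isometric isomorphism of $\mathcal{L}(\otimes_\pi^m E;F)$ onto $\mathcal{L}(^m E;F)$. Since $(\tau,\alpha)$-lineability is a purely linear-algebraic property of a distinguished subset inside an ambient vector space, it should transfer along any linear isomorphism that carries one distinguished set bijectively onto the other. So the real content is just to observe that $\Psi$ carries $\mathcal{NL}(\otimes_\pi^m E;F)$ onto $\mathcal{NL}(^m E;F)$.

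First I would record the structural facts about $\Psi$. Because $\Psi$ preserves continuity and its restriction is an isomorphism \emph{onto} $\mathcal{L}(^m E;F)$, the map $\Psi$ sends continuous operators to continuous operators and, as $\Psi^{-1}$ enjoys the same property, sends non-continuous operators to non-continuous ones. Together with $\Psi(0)=0$ this gives
\[
\Psi\left(\mathcal{NL}(\otimes_\pi^m E;F)\cup\{0\}\right)=\mathcal{NL}(^m E;F)\cup\{0\}.
\]
In particular, a subspace $W$ of $L(\otimes_\pi^m E;F)$ satisfies $W\subset\mathcal{NL}(\otimes_\pi^m E;F)\cup\{0\}$ if and only if $\Psi(W)\subset\mathcal{NL}(^m E;F)\cup\{0\}$; and since $\Psi$ is a linear bijection it sends subspaces to subspaces, preserving dimension and inclusions.

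Then I would prove the \emph{if} direction, the \emph{only if} direction being identical with $\Psi$ replaced by $\Psi^{-1}$. Assume $\mathcal{NL}(\otimes_\pi^m E;F)$ is $(\tau,\alpha)$-lineable. For the $\tau$-lineability of $\mathcal{NL}(^m E;F)$, take a $\tau$-dimensional subspace of $\mathcal{NL}(\otimes_\pi^m E;F)\cup\{0\}$ and push it forward by $\Psi$. For the second requirement, let $W_\tau\subset\mathcal{NL}(^m E;F)\cup\{0\}$ be any subspace with $\dim W_\tau=\tau$. By the displayed identity, $\Psi^{-1}(W_\tau)$ is a $\tau$-dimensional subspace of $\mathcal{NL}(\otimes_\pi^m E;F)\cup\{0\}$; applying the hypothesis yields $W'_\alpha$ with $\dim W'_\alpha=\alpha$ and $\Psi^{-1}(W_\tau)\subset W'_\alpha\subset\mathcal{NL}(\otimes_\pi^m E;F)\cup\{0\}$. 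Setting $W_\alpha:=\Psi(W'_\alpha)$ gives $\dim W_\alpha=\alpha$ and $W_\tau\subset W_\alpha\subset\mathcal{NL}(^m E;F)\cup\{0\}$, as required.

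I do not expect a genuine obstacle: the entire weight of the argument sits in the observation that both $\Psi$ and $\Psi^{-1}$ preserve continuity, so that $\Psi$ restricts to a bijection between the two non-continuous sets. The only point deserving (routine) care is that $\Psi^{-1}$ reflects continuity — that $\Psi$ does not turn a discontinuous operator into a continuous one — which is precisely the statement that $\Psi$ maps $\mathcal{L}(\otimes_\pi^m E;F)$ onto, and not merely into, $\mathcal{L}(^m E;F)$; this is guaranteed by the isomorphism recalled just before the lemma. Everything else is the formal transfer of a linear-algebraic property along a linear isomorphism.
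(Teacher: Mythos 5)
Your proposal is correct and follows essentially the same route as the paper: both arguments transfer a $\tau$-dimensional subspace across the isomorphism $\Psi$, apply the hypothesis on the tensor-product side, and push the resulting $\alpha$-dimensional subspace back, using that $\Psi$ and $\Psi^{-1}$ preserve (non)continuity. Your version is slightly more careful in spelling out the initial $\tau$-lineability requirement and the set identity $\Psi\left(\mathcal{NL}(\otimes_{\pi}^{m}E;F)\cup\{0\}\right)=\mathcal{NL}(^{m}E;F)\cup\{0\}$, which the paper uses implicitly.
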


\begin{proof}
($\Leftarrow$) Let $W_{0}$ be a subspace of $L\left(  ^{m}E;F\right)  $
contained in $\mathcal{NL}\left(  ^{m}E;F\right)  \cup\{0\}$ with $\dim
W_{0}=\tau.$ Thus $W_{1}:=\Psi^{-1}(W_{0})\subset\mathcal{NL}\left(
\otimes_{\pi}^{m}E;F\right)  \cup\{0\}$ is such that $\dim W_{1}=\tau$. \ By
hypothesis there is a vector space $V$ with $\dim\left(  V\right)  =\alpha,$
such that%
\[
W_{1}\subset V\subset\mathcal{NL}\left(  \otimes_{\pi}^{m}E;F\right)
\cup\{0\}.
\]
Then%
\[
W_{0}=\Psi(W_{1})\subset\Psi(V)\subset\Psi\left(  \mathcal{NL}\left(
\otimes_{\pi}^{m}E;F\right)  \cup\{0\}\right)  =\mathcal{NL}\left(
^{m}E;F\right)  \cup\{0\}.
\]
Since $\dim\Psi(V)=\dim V=\alpha$, the proof is done. The converse is similar.
\end{proof}

\begin{lemma}
\label{090}If $\dim E=\alpha\geq\aleph_{0}$, then $\dim\left(  \otimes_{\pi
}^{m}E\right)  =\alpha$.\ 
\end{lemma}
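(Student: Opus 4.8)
The plan is to show that the dimension of the $m$-fold projective tensor product equals the dimension of $E$ when $E$ is infinite-dimensional. First I would observe that dimension here means the algebraic (Hamel) dimension, and that the projective tensor norm plays no role in the dimension count—so it suffices to compute the dimension of the algebraic tensor product $\otimes^m E$. The key structural fact is that $\otimes^m E$ is spanned by the elementary tensors $x_1\otimes\cdots\otimes x_m$, and if $\{e_\lambda:\lambda\in\Lambda\}$ is a Hamel basis of $E$ with $\operatorname{card}(\Lambda)=\alpha$, then the set of elementary tensors $\{e_{\lambda_1}\otimes\cdots\otimes e_{\lambda_m}:(\lambda_1,\dots,\lambda_m)\in\Lambda^m\}$ forms a Hamel basis of $\otimes^m E$.

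The key steps, in order, are as follows. First I would recall the standard fact that for vector spaces the tensor-product basis is the product of bases, so $\dim(\otimes^m E)=(\dim E)^m=\alpha^m$. Second, I would invoke the cardinal arithmetic already assembled in the excerpt: the rule that if $2\le\mathfrak b\le 2^{\mathfrak a}$ then $\mathfrak b^{\mathfrak a}=2^{\mathfrak a}$, applied here in the simpler infinite-cardinal form. Since $\alpha\ge\aleph_0$ is infinite, the finite power $\alpha^m$ (a finite product of copies of $\alpha$) satisfies $\alpha^m=\alpha$ by the standard rule $\kappa\cdot\kappa=\kappa$ for infinite cardinals $\kappa$, iterated $m-1$ times. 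Combining these gives $\dim(\otimes_\pi^m E)=\alpha^m=\alpha$, which is the claim.

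The main obstacle—really the only point requiring care—is justifying that the elementary tensors indexed by $\Lambda^m$ are linearly independent, i.e. that they genuinely form a basis rather than merely a spanning set. This is the part where one cannot wave hands, but it is a textbook result on tensor products of vector spaces (the universal property guarantees that the $m$-linear map sending $(e_{\lambda_1},\dots,e_{\lambda_m})$ to the corresponding basis coordinate functional separates these tensors), so I would cite it to \cite{ryan} rather than reprove it. Everything else is finite cardinal arithmetic on an infinite cardinal. I would also note explicitly that passing from the algebraic tensor product to $\otimes_\pi^m E$ leaves the Hamel dimension unchanged, since completing or norming a space does not alter which set is an algebraic basis of the dense subspace—but in fact the statement concerns the normed space $\otimes_\pi^m E$ before completion, whose underlying vector space is exactly the algebraic tensor product, so no completion issue even arises.
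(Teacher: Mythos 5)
Your proposal is correct and follows essentially the same route as the paper: choose a Hamel basis of $E$, note that the corresponding elementary tensors form a basis of the algebraic tensor product, and conclude $\dim(\otimes_\pi^m E)=\alpha^m=\alpha$ by infinite cardinal arithmetic. The extra remarks you add (linear independence of the tensor basis, the norm being irrelevant to Hamel dimension) are sound clarifications of steps the paper leaves implicit.
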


\begin{proof}
In fact if $\{b_{i}:i\in\Delta\}$ is a basis of $E$, then the set
$\{(b_{i_{1}}\otimes b_{i_{2}}\otimes\cdots\otimes b_{i_{m}}):i_{1}%
,..,i_{m}\in\Delta\}$ is a basis of $E\otimes\cdots\otimes E$. So
\[
\dim\left(  \otimes_{\pi}^{m}E\right)  =\alpha^{m}=\alpha.
\]

\end{proof}

Now we state and prove our first main theorem:

\begin{theorem}
\label{dimKer} \label{0101}Let $E$ be an infinite-dimensional normed space and
$F$ be a normed space. Let $n\in\mathbb{N}$. Consider linearly independent
linear operators $T_{1},\ldots,T_{n} \in\mathcal{NL}\left(  E;F\right)  $ so
that $span (T_{1},\ldots, T_{n})$ is contained in $\mathcal{NL}\left(
E;F\right)  \cup\{ 0\}$. If $\cap_{i=1}^{n} Ker T_{i}$ is non trivial and
infinite dimensional (with dimension $\gamma$) then, there exists a subspace
$S$ of $L(E;F)$ such that
\[
span(T_{1},\ldots,T_{n})\subset S\subset\mathcal{NL}\left(  E;F\right)
\cup\{0\}
\]
and $\dim S\geq\max(2^{\gamma},\dim F)$.
\end{theorem}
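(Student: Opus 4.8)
The plan is to exploit the fact that every operator in $span(T_1,\dots,T_n)$ vanishes on the common kernel $K:=\bigcap_{i=1}^{n} Ker\,T_i$, so that $K$ becomes a ``free'' region on which one may attach new discontinuous behaviour without disturbing the $T_i$. Since $K$ is an infinite-dimensional normed space (with $\dim K=\gamma$) and $F$ is a normed space, I would first apply Corollary~\ref{r2} (with $m=1$ and $E$ replaced by $K$) to obtain a subspace $V\subset\mathcal{NL}\left(K;F\right)\cup\{0\}$ with $\dim V=\max\{2^{\gamma},\dim F\}$; every nonzero element of $V$ is then a discontinuous linear operator from $K$ to $F$.

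Next I would transport $V$ into $L(E;F)$. Fix an algebraic complement $M$ of $K$ in $E$, write $\pi\colon E\to K$ for the (linear) projection onto $K$ along $M$, and for each $R\in V$ set $\widetilde{R}:=R\circ\pi\in L(E;F)$. Because $\pi|_{K}=\mathrm{id}_{K}$ we have $\widetilde{R}|_{K}=R$, so the map $R\mapsto\widetilde{R}$ is linear and injective, and $\widetilde{R}$ is discontinuous whenever $R\neq0$ (the restriction of a bounded operator to a subspace is bounded, hence an operator whose restriction to $K$ is unbounded is itself unbounded). Writing $\widetilde{V}:=\{\widetilde{R}:R\in V\}$, I would take
\[
S:=span(T_1,\dots,T_n)+\widetilde{V}.
\]

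Finally I would verify the three required properties, all via the single device of restricting to $K$. First, $S$ contains $span(T_1,\dots,T_n)$ by construction. Second, any element of $\widetilde{V}\cap span(T_1,\dots,T_n)$ restricts on $K$ to both $R$ and $0$, forcing $R=0$; hence the sum defining $S$ is direct and $\dim S=n+\dim V\geq\max\{2^{\gamma},\dim F\}$. Third, take a nonzero element $\sum_{i=1}^{n}a_iT_i+\widetilde{R}\in S$: if $R\neq0$, then, since the $T_i$ vanish on $K$, its restriction to $K$ equals $R$, which is unbounded, so the operator is discontinuous; and if $R=0$, then the element is a nonzero combination $\sum_{i=1}^{n}a_iT_i$, discontinuous by hypothesis. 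Thus $S\subset\mathcal{NL}\left(E;F\right)\cup\{0\}$. The only genuinely substantive ingredient is the appeal to Corollary~\ref{r2} to manufacture the large supply $V$ of discontinuous operators on $K$; everything else reduces to the observation that $T_1,\dots,T_n$ are identically zero on $K$, which turns restriction to $K$ into a clean separating test for both linear independence and discontinuity.
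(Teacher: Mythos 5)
Your proof is correct and follows essentially the same route as the paper: both apply Corollary~\ref{r2} to the common kernel $K$ to produce a large subspace of $\mathcal{NL}(K;F)$, extend each operator to $E$ by annihilating a complement of $K$ (your projection $\pi$ is exactly the paper's extension by zero on the completed Hamel basis), and then use restriction to $K$ as the single test for both the directness of the sum with $span(T_1,\ldots,T_n)$ and the discontinuity of every nonzero element. No gaps; the two write-ups differ only in notation.
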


\begin{proof}
Let us call $K:=\cap_{i=1}^{n}KerT_{i}$ and let $\delta:=\max\left\{
2^{\gamma},\dim F\right\}  $. By Corollary \ref{r2}, $\mathcal{NL}\left(
K;F\right)  $ is $\delta$-lineable. Let $S_{\eta}:K\rightarrow F$ be LI with
$card(\{S_{\eta}:\eta\in\Phi\})=\delta$ and $span(\{S_{\eta}:\eta\in
\Phi\})\subset\mathcal{NL}\left(  K;F\right)  \cup\{0\}$. Let $\mathcal{B}$ be
a normalized Hamel basis of $K$ and complete it to a basis $\mathcal{E}$ of
$E$. Define
\[
\widetilde{S_{\eta}}(x)=\left\{
\begin{array}
[c]{c}%
S_{\eta}(x)\ \text{ if }x\in\mathcal{B}\\
0\ \text{ if }x\in\mathcal{E}-\mathcal{B}%
\end{array}
\right.  \text{ }%
\]
Note that the set $\left\{  \widetilde{S_{\eta}}:\eta\in\Phi\right\}  $ is LI
and
\[
card\left\{  \widetilde{S_{\eta}}:\eta\in\Phi\right\}  =\delta.
\]
Also,
\[
span\left\{  \widetilde{S_{\eta}}:\eta\in\Phi\right\}  \subset\mathcal{NL}%
\left(  E;F\right)  \cup\{0\}.
\]
Note also that we can add $T_{1},\ldots,T_{n}$ to the set $\left\{
\widetilde{S_{\eta}}:\eta\in\Phi\right\}  $ without loosing our properties.
First note that%
\[
\left\{  T_{1},\ldots,T_{n},\widetilde{S_{\eta}}:\eta\in\Phi\right\}
\]
is LI. In fact, if
\[
a_{1}T_{1}+\cdots+a_{n}T_{n}+b_{1}\widetilde{S_{\eta_{1}}}+\cdots
+b_{k}\widetilde{S_{\eta_{k}}}=0,
\]
then, in particular, for all $x\in\mathcal{E}-\mathcal{B}$, we have%
\[
a_{1}T_{1}(x)+\cdots+a_{n}T_{n}(x)=0
\]
and thus $a_{1}T_{1}+\cdots+a_{n}T_{n}=0$. Therefore
\[
a_{1}=\cdots=a_{n}=0.
\]
As a consequence,
\[
b_{1}\widetilde{S_{\eta_{1}}}+\cdots+b_{k}\widetilde{S_{\eta_{k}}}=0,
\]
and so, restricting to $x\in K$ we get
\[
b_{1}S_{\eta_{1}}(x)+\cdots+b_{k}S_{\eta_{k}}(x)=0
\]
and the linear independence of $S_{\eta}$ implies that
\[
b_{1}=\cdots=b_{k}=0.
\]
Moreover, $span\left\{  T_{1},\ldots,T_{n},\widetilde{S_{\eta}}:\eta\in
\Phi\right\}  \subset\mathcal{NL}\left(  E;F\right)  \cup\{0\}.$ Indeed, for
any linear combination we have
\begin{align*}
\left\Vert a_{1}T_{1}+\cdots+a_{n}T_{n}+\sum_{j=1}^{n}b_{j}\widetilde
{S_{\eta_{j}}}\right\Vert  &  \geq\sup_{x\in B_{K}}\left\Vert a_{1}%
T_{1}(x)+\cdots+a_{n}T_{n}(x)+\sum_{j=1}^{n}b_{j}\widetilde{S_{\eta_{j}}%
}(x)\right\Vert \\
&  =\sup_{x\in B_{K}}\left\Vert \sum_{j=1}^{n}b_{j}\widetilde{S_{\eta_{j}}%
}(x)\right\Vert \\
&  =\infty.
\end{align*}

\end{proof}

\begin{corollary}
\label{uno} Let $E$ be an infinite dimensional normed space and let $F$ be a
normed space with $\dim E> \dim F$. For any $m\in\mathbb{N}$, the set
$\mathcal{NL}\left( ^{m} E;F\right) $ is $(1, 2^{\dim E})$-lineable and
$2^{\dim E}$ cannot be replaced with a bigger cardinal number.
\end{corollary}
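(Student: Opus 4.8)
The plan is to reduce the $m$-linear statement to the linear one through the projective tensor product and then to invoke \thmref{dimKer} with $n=1$. First I would apply \lemref{787}: the set $\mathcal{NL}\left(^{m}E;F\right)$ is $\left(1,2^{\dim E}\right)$-lineable in $L\left(^{m}E;F\right)$ if and only if $\mathcal{NL}\left(\otimes_{\pi}^{m}E;F\right)$ is $\left(1,2^{\dim E}\right)$-lineable in $L\left(\otimes_{\pi}^{m}E;F\right)$. Writing $\widetilde{E}:=\otimes_{\pi}^{m}E$, \lemref{090} gives $\dim\widetilde{E}=\dim E$, so $\widetilde{E}$ is an infinite-dimensional normed space with $\dim\widetilde{E}>\dim F$ and $2^{\dim\widetilde{E}}=2^{\dim E}$. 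Hence it suffices to establish the corollary in the linear case, i.e. the $\left(1,2^{\dim\widetilde{E}}\right)$-lineability of $\mathcal{NL}\left(\widetilde{E};F\right)$.

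For the extension property I would fix an arbitrary $T\in\mathcal{NL}\left(\widetilde{E};F\right)$; since every nonzero multiple of a discontinuous operator is discontinuous, $\operatorname{span}(T)\subset\mathcal{NL}\left(\widetilde{E};F\right)\cup\{0\}$, so every one-dimensional subspace inside $\mathcal{NL}\left(\widetilde{E};F\right)\cup\{0\}$ arises this way. The key point is that the kernel of $T$ is as large as possible: by rank--nullity for vector spaces $\dim\widetilde{E}=\dim\ker T+\dim\operatorname{Im}T$ as cardinals, and because $\dim\operatorname{Im}T\leq\dim F<\dim\widetilde{E}$ with $\dim\widetilde{E}$ infinite, cardinal arithmetic forces $\gamma:=\dim\ker T=\dim\widetilde{E}=\dim E$. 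In particular $\ker T$ is non-trivial and infinite-dimensional, so \thmref{dimKer} applies with $n=1$ and $T_{1}=T$, yielding a subspace $S$ of $L\left(\widetilde{E};F\right)$ with $\operatorname{span}(T)\subset S\subset\mathcal{NL}\left(\widetilde{E};F\right)\cup\{0\}$ and $\dim S\geq\max\left(2^{\gamma},\dim F\right)=2^{\dim E}$. I would then extend $\{T\}$ to a basis of $S$ and retain $T$ together with $2^{\dim E}$ of the basis vectors, obtaining a subspace of dimension exactly $2^{\dim E}$ that contains $\operatorname{span}(T)$ and lies in $\mathcal{NL}\left(\widetilde{E};F\right)\cup\{0\}$. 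This gives the required extension for each line, while $1$-lineability is clear since $\mathcal{NL}\left(\widetilde{E};F\right)\neq\emptyset$.

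For sharpness I would use the dimension count already recorded before Corollary~\ref{r2}: $\dim L\left(^{m}E;F\right)=\left(card\,F\right)^{\dim E}$. Because $\dim F<\dim E\leq2^{\dim E}$ and $\mathfrak{c}\leq2^{\dim E}$, we have $2\leq card\,F\leq2^{\dim E}$, and the cardinal-arithmetic identity $\mathfrak{b}^{\mathfrak{a}}=2^{\mathfrak{a}}$ recalled after \thmref{rz} yields $\dim L\left(^{m}E;F\right)=2^{\dim E}$. Consequently no subspace of $L\left(^{m}E;F\right)$, and in particular none contained in $\mathcal{NL}\left(^{m}E;F\right)\cup\{0\}$, can have dimension exceeding $2^{\dim E}$, so this cardinal cannot be enlarged.

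The main obstacle I anticipate is the kernel computation rather than the tensor reduction: one must verify that the strict inequality $\dim E>\dim F$ forces $\dim\ker T=\dim E$ for \emph{every} discontinuous $T$, not merely some well-chosen one, because $\left(1,\beta\right)$-lineability requires the extension to work starting from an arbitrary line. This is precisely what makes a single operator ($n=1$) enough in \thmref{dimKer} and what delivers the full cardinal $2^{\dim E}$; without $\dim E>\dim F$ the kernel could fail to have dimension $\dim E$ and the argument would collapse.
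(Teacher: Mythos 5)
Your proposal is correct and follows essentially the same route as the paper: reduce to the linear case via Lemma \ref{787} and Lemma \ref{090}, use rank--nullity together with $\dim E>\dim F$ to show that \emph{every} nonzero $T\in\mathcal{NL}(E;F)$ has $\dim\ker T=\dim E$, and then apply Theorem \ref{dimKer} with $n=1$; the sharpness argument via $\dim L(^{m}E;F)=(card\,F)^{\dim E}=2^{\dim E}$ is also the one the paper uses. The only cosmetic difference is your explicit trimming of $S$ to dimension exactly $2^{\dim E}$, which is automatic since no subspace of $L(^{m}E;F)$ can exceed that dimension.
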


\begin{proof}
We prove the linear case $m=1$. The multilinear case follows then from Lemma
\ref{787} and Lemma \ref{090}. Let $T\in\mathcal{NL}\left(  E;F\right)  $ be
non null. Let $\lambda:=\dim E$ and $\beta:=\dim F$. Since 
\[
\dim KerT+\dim ImT=\lambda,
\]
we have
\[
\lambda\leq\dim KerT+\beta.
\]
Since $\lambda>\beta,$ we have
\[
\dim KerT=\lambda.
\]
By Theorem \ref{dimKer}, there is a subspace $S$ of $L(E;F)$ such that
\[
T\in S\subset\mathcal{NL}\left(  E;F\right)  \cup\{0\}
\]
and $\dim S=\max(2^{\lambda},\beta)=2^{\lambda}$. Besides, $2^{\dim E}$ is
maximal as $2^{\dim E}=\left(  cardF\right)  ^{\dim E}=\dim L\left(
^{m}E;F\right)  $.
\end{proof}

In particular, $\mathcal{NL}\left(  ^{m}E;\mathbb{R}\right)  $ is $\left(
1,2^{\dim E}\right)  $-lineable. The next corollary shows that we can go
further in the scalar case.

\medskip
%%%%%%%%%%%%%%%%%%%%%%%%%%%%%%%%%%%%%%%%%%%%%%%%%%%%

\begin{corollary}
Let $E$ be an infinite dimensional normed space and let $m\in\mathbb{N}$. The
set $\mathcal{NL}\left( ^{m} E;\mathbb{R}\right) $ is $(n, 2^{\dim E}%
)$-lineable for any $n\in\mathbb{N}$, and $2^{\dim E}$ cannot be replaced with
a bigger cardinal number.
\end{corollary}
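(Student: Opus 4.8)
The plan is to reduce the multilinear case to the linear case $m=1$ via Lemma \ref{787} and Lemma \ref{090}, exactly as in Corollary \ref{uno}, so the heart of the matter is proving $(n,2^{\dim E})$-lineability of $\mathcal{NL}\left( E;\mathbb{R}\right) $ for an arbitrary positive integer $n$. The $(1,2^{\dim E})$-lineability is already furnished by Corollary \ref{uno} (applied with $F=\mathbb{R}$, where $\dim E>\dim\mathbb{R}=1$ is automatic), so the new content is the ability to \emph{absorb} any prescribed $n$-dimensional subspace $W_n\subset\mathcal{NL}\left( E;\mathbb{R}\right) \cup\{0\}$ into a large subspace.

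The key step is to verify the hypothesis of Theorem \ref{dimKer} for such a $W_n$. Given a basis $T_1,\ldots,T_n$ of $W_n$ consisting of linearly independent functionals in $\mathcal{NL}\left( E;\mathbb{R}\right) $, I would look at $K:=\cap_{i=1}^n \operatorname{Ker}T_i$. Since each $T_i:E\to\mathbb{R}$ has one-dimensional range, the kernel of each is a hyperplane, and the intersection of $n$ such kernels is a subspace of codimension at most $n$ in $E$. Because $E$ is infinite-dimensional, removing finitely many dimensions leaves a subspace of the same (infinite) dimension, so $K$ is nontrivial and infinite-dimensional with $\dim K=\dim E=:\gamma$. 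This is precisely the condition needed to invoke Theorem \ref{dimKer}.

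Having checked the hypothesis, Theorem \ref{dimKer} produces a subspace $S$ with
\[
W_n=\operatorname{span}(T_1,\ldots,T_n)\subset S\subset\mathcal{NL}\left( E;\mathbb{R}\right) \cup\{0\},\qquad \dim S\geq\max\bigl(2^{\gamma},\dim\mathbb{R}\bigr)=2^{\dim E}.
\]
Since $\dim S$ cannot exceed $\dim L\left( E;\mathbb{R}\right) =2^{\dim E}$, we get $\dim S=2^{\dim E}$, which is exactly what $(n,2^{\dim E})$-lineability demands: every $n$-dimensional subspace inside the set extends to a $2^{\dim E}$-dimensional one. The sharpness claim is immediate from the computation $\dim L\left( {}^m E;\mathbb{R}\right) =2^{\dim E}$ recorded before Corollary \ref{r2}, so no subspace contained in the set can have dimension larger than $2^{\dim E}$.

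I expect the only genuine obstacle to be confirming that $\dim K=\dim E$ rather than merely $\dim K\geq\aleph_0$; one must be slightly careful that finite codimension in an infinite-dimensional space preserves the dimension cardinal, but this is a standard cardinal-arithmetic fact ($\gamma=\gamma-n$ for infinite $\gamma$ and finite $n$) and causes no real difficulty. Everything else is a direct application of the machinery already assembled. One subtlety worth a remark is that $W_n$ in the definition of $(n,\beta)$-lineability is \emph{arbitrary}, so it is important that Theorem \ref{dimKer} applies to \emph{any} linearly independent $T_1,\ldots,T_n$ spanning a subspace of $\mathcal{NL}\left( E;\mathbb{R}\right) \cup\{0\}$, which it does; this is what makes the argument yield the restrictive $(n,2^{\dim E})$-lineability and not merely $2^{\dim E}$-lineability.
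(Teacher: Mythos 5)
Your proposal is correct and follows essentially the same route as the paper: reduce to $m=1$ via Lemma \ref{787} and Lemma \ref{090}, show that $\cap_{i=1}^{n}\operatorname{Ker}T_{i}$ has dimension $\dim E$, and invoke Theorem \ref{dimKer}. The only cosmetic difference is that you justify $\dim\left(\cap_{i=1}^{n}\operatorname{Ker}T_{i}\right)=\dim E$ by the general fact that finite codimension preserves infinite dimension, whereas the paper iterates the second isomorphism theorem --- the same argument in different clothing.
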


\begin{proof}
Once more, it is enough to prove the case $m=1$, as the multilinear case
follows from Lemma \ref{787} and Lemma \ref{090}. \newline\indent
Let $n\in\mathbb{N}$ and let $T_{1}, \ldots, T_{n}\in\mathcal{NL}\left(
E;\mathbb{R}\right) $ be linearly independent so that $span(T_{1},\ldots
,T_{n})\subset\mathcal{NL}\left(  E;\mathbb{R}\right) \cup\{0\}$. Let
$\lambda:=\dim E$. By Theorem \ref{dimKer} it is sufficient to prove that
$\cap_{i=1}^{n} Ker T_{i}$ is infinite dimensional. But this follows from the
second isomorphism theorem for vector spaces, that is, the quotient spaces
$(Ker T_{1} +Ker T_{2})/(Ker T_{2})$ and $(Ker T_{1})/(Ker T_{1}\cap Ker
T_{2})$ are isomorphic. As $Ker T_{1} +Ker T_{2}=E$, $Ker T_{2}$ has
co-dimension $1$ and $\dim Ker T_{1}=\lambda$, it follows that $\dim(Ker
T_{1}\cap Ker T_{2})=\lambda$. Repeating the argument finitely many times, we
get that $\dim(\cap_{i=1}^{n} Ker T_{i})=\lambda$. Theorem \ref{dimKer} tells
us that there exists a subspace $S$ of $L(E;\mathbb{R})$ such that
\[
span(T_{1},\ldots,T_{n})\subset S\subset\mathcal{NL}\left(  E;\mathbb{R}%
\right) \cup\{0\}
\]
and $\dim S\geq2^{\lambda}$. \newline\indent
Besides, $2^{\dim E}$ is maximal as $2^{\dim E}=\dim L\left(  ^{m}%
E;\mathbb{R}\right) $.
\end{proof}

Our second main result (Theorem \ref{333}) shows   that we can also get   lineability properties whenever $F$ has a bigger dimension than $E$. First we need to prove a lemma for the case of linear operators. Given
an infinite cardinal number $\tau_{0},$ consider
\[
\mathcal{NL}_{\tau_{0}}\left(  E;F\right)  :=\left\{  T\in\mathcal{NL}\left(
E;F\right)  :\dim(T(E))\leq\tau_{0}<\dim F\right\}  .
\]
The next lemma shows, under additional hypotheses on $E,F$, that the set
$\mathcal{NL}_{\tau_{0}}\left(  ^{m}E;F\right)  $ is $\left(  \tau,2^{\dim
E}\right)  $-lineable for all $\tau<\dim F$; this may sound strange because
$\mathcal{NL}_{\tau_{0}}\left(  ^{m}E;F\right)  \subset\mathcal{NL}\left(
^{m}E;F\right)  $, but it is a natural phenomenon when dealing with this
stronger notion of lineability.

\begin{lemma}
\label{xc}Let $E,F$ be normed spaces with $\dim E>\mathfrak{c}$ and $\dim
F>\mathfrak{c}$ and consider an infinite cardinal number $\tau_{0}<\dim F$.
The set $\mathcal{NL}_{\tau_{0}}\left(  E;F\right)  $ is $\left(  \tau,2^{\dim
E}\right)  $-lineable for $\tau<\min\{\dim E,\dim F\}$.
\end{lemma}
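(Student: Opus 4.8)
The plan is to enlarge the given $\tau$-dimensional space by adjoining a very large family of rank-one operators of the form $g(\cdot)\,v$, where $g$ runs over a maximal independent family of discontinuous scalar functionals on $E$ and $v$ is a single, carefully chosen vector of $F$. Concretely, fix an arbitrary subspace $W_\tau\subset\mathcal{NL}_{\tau_0}\left(E;F\right)\cup\{0\}$ with $\dim W_\tau=\tau$ and a basis $\{T_\alpha:\alpha\in\Lambda\}$ of it, $card(\Lambda)=\tau$. Set $F_0:=span\bigcup_\alpha T_\alpha(E)$; since each $T_\alpha$ has image of dimension at most $\tau_0$ and there are $\tau$ of them, $\dim F_0\le\max\{\tau,\tau_0\}<\dim F$. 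Applying Theorem \ref{rz} to $E$, I would fix a linearly independent family $\{g_\eta:\eta\in\Phi\}\subset\mathcal{NL}\left(E;\mathbb{R}\right)$ with $card(\Phi)=2^{\dim E}$ and $span\{g_\eta:\eta\in\Phi\}\subset\mathcal{NL}\left(E;\mathbb{R}\right)\cup\{0\}$.

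The crucial step, and the one I expect to be the main obstacle, is to produce a vector $v\in F$ with $\mathrm{dist}(v,F_0)>0$, i.e. $v\notin\overline{F_0}$. This is exactly where the standing assumption $\dim F>\mathfrak{c}$ must be used. Indeed, if $F_0$ were dense then, $F$ being a metric space, every point of $F$ would be a limit of a sequence from $F_0$, so that $\dim F=card(F)\le card(F_0)^{\aleph_0}\le\bigl(\max\{\mathfrak{c},\tau,\tau_0\}\bigr)^{\aleph_0}$; when $\max\{\tau,\tau_0\}\le\mathfrak{c}$ this forces $\dim F\le\mathfrak{c}$, contradicting $\dim F>\mathfrak{c}$, so $F_0$ cannot be dense and $v$ exists. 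The delicate point is to carry this non-density argument through for all admissible $\tau,\tau_0$; this cardinal-arithmetic bookkeeping is the heart of the lemma, and I would isolate it as a preliminary claim ($\overline{F_0}\neq F$) before proceeding.

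Once such a $v$ is fixed, I set $\widetilde{S_\eta}:=g_\eta(\cdot)\,v$ and $S:=W_\tau+span\{\widetilde{S_\eta}:\eta\in\Phi\}$, and verify three properties, in the spirit of the proof of Theorem \ref{dimKer}. For linear independence of $\{T_\alpha\}\cup\{\widetilde{S_\eta}\}$: choosing an algebraic functional $\psi$ on $F$ with $\psi|_{F_0}=0$ and $\psi(v)=1$ (possible since $v\notin F_0$) and applying $\psi$ to a vanishing combination $\sum a_\alpha T_\alpha+\bigl(\sum b_\eta g_\eta\bigr)(\cdot)v=0$ kills the $T_\alpha$-part and yields $\sum b_\eta g_\eta=0$, whence all $b_\eta=0$ and then all $a_\alpha=0$. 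For discontinuity of every nonzero $U=\sum a_\alpha T_\alpha+w(\cdot)v$ with $w:=\sum b_\eta g_\eta$: if some $b_\eta\neq 0$ then $w$ is unbounded on $B_E$, and writing $T:=\sum a_\alpha T_\alpha$ with $T(x)\in F_0$ the estimate $\|U(x)\|=\|T(x)+w(x)v\|\ge|w(x)|\,\mathrm{dist}(v,F_0)$ forces $\sup_{x\in B_E}\|U(x)\|=\infty$; if all $b_\eta=0$ then $U\in W_\tau\setminus\{0\}$ is discontinuous by hypothesis. For the image bound: such a $U$ satisfies $U(E)\subset T(E)+span(v)$ with $\dim T(E)\le\tau_0$, so $\dim U(E)\le\tau_0$. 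Hence $W_\tau\subset S\subset\mathcal{NL}_{\tau_0}\left(E;F\right)\cup\{0\}$ and $\dim S=\tau+2^{\dim E}=2^{\dim E}$. Finally, $\tau$-lineability of $\mathcal{NL}_{\tau_0}\left(E;F\right)$ is immediate (take $\tau$ linearly independent vectors of $F$, legitimate since $\tau<\dim F$, each multiplied by a single discontinuous functional), which completes the $\left(\tau,2^{\dim E}\right)$-lineability.
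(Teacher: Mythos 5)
Your overall architecture (keep $W_\tau$, adjoin $2^{\dim E}$ rank-one perturbations $g_\eta(\cdot)v$ built from discontinuous scalar functionals, separate the two blocks with an algebraic functional) is reasonable, and your linear-independence, rank-bound and $\tau$-lineability steps are fine. But the step you yourself flag as ``the heart of the lemma'' is a genuine gap, and it does not close: you need a vector $v$ with $\mathrm{dist}(v,F_0)>0$, i.e.\ $v\notin\overline{F_0}$, and the hypotheses do not guarantee that such a $v$ exists. The subspace $F_0=span\bigcup_\alpha T_\alpha(E)$ is proper (its dimension is at most $\max\{\tau,\tau_0,\aleph_0\}<\dim F$), but a proper subspace can be dense. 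Your cardinality argument $card(\overline{F_0})\le card(F_0)^{\aleph_0}$ only yields a contradiction when $\left(\mathfrak{c}\cdot\max\{\tau,\tau_0\}\right)^{\aleph_0}<card(F)$, whereas the lemma allows $\tau_0$ to be any infinite cardinal below $\dim F$. Concretely, take $F=\ell_2(\Gamma)$ with $card(\Gamma)=\aleph_\omega>\mathfrak{c}$: then $\dim F=\aleph_\omega^{\aleph_0}>\aleph_\omega$ (K\"onig), while $F_0:=span\{e_\gamma:\gamma\in\Gamma\}$ is a dense subspace of dimension $\aleph_\omega$. Taking $\tau_0=\aleph_\omega$ and, for $\tau=1$, a single discontinuous $T$ whose image spans $F_0$ (a legitimate input to the lemma), every $v\in F$ satisfies $\mathrm{dist}(v,F_0)=0$ and your key estimate $\|T(x)+w(x)v\|\ge|w(x)|\,\mathrm{dist}(v,F_0)$ degenerates.

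The paper avoids this entirely by working on the domain rather than the range. It only needs $v\notin\bigcup\operatorname{Im}\left(\sum_j b_jT_{\zeta_j}\right)$, a purely set-theoretic condition obtained by counting ($\tau_0\cdot\tau\cdot\mathfrak{c}<card(F)$); no metric separation of $v$ from the images is ever used. The metric content is moved to $E$: for every nonzero finite combination of the $T_\zeta$'s one fixes a sequence of normalized witnesses of its unboundedness, lets $S$ be the span of all these witnesses (of dimension at most $\tau\cdot\mathfrak{c}<\dim E$), writes $E=V\oplus S$, builds the $2^{\dim E}$ new operators on $V$ with values in $span\{v\}$, and extends them by zero on $S$. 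Any combination with a nonzero $T$-part is then already unbounded on the unit sphere of $S$, where the new operators vanish, and combinations with only the new part are unbounded on $V$. To repair your proof you would either have to import this domain decomposition, or restrict the statement to the case $\max\{\tau,\tau_0\}\le\mathfrak{c}$, which is strictly weaker than what is claimed.
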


\begin{proof}
Note that the construction of Corollary \ref{r2} shows that $\mathcal{NL}%
_{\tau_{0}}\left(  E;F\right)  $ is $\tau$-lineable.

Let $\lambda:=\dim E$ and $\beta:=\dim F$. Let $\{T_{\zeta}:\zeta\in\Omega\}$
be LI with
\[
card\left(  \Omega\right)  =\tau<\min\{\dim E,\dim F\}
\]
and
\[
span\{T_{\zeta}:\zeta\in\Omega\}\subset\mathcal{NL}_{\tau_{0}}\left(
E;F\right)  \cup\{0\}.
\]
For all $\left(  b_{j}\right)  _{j=1}^{\infty}\in\mathbb{R}^{\mathbb{N}}$, we
have%
\begin{align*}
&  card\left(
%TCIMACRO{\tbigcup \limits_{n\in\mathbb{N}}}%
%BeginExpansion
{\textstyle\bigcup\limits_{n\in\mathbb{N}}}
%EndExpansion
\text{ }%
%TCIMACRO{\tbigcup \limits_{\zeta_{1},...,\zeta_{n}\in\Omega}}%
%BeginExpansion
{\textstyle\bigcup\limits_{\zeta_{1},...,\zeta_{n}\in\Omega}}
%EndExpansion
\text{ }%
%TCIMACRO{\tbigcup \limits_{b_{1},...,b_{n}\in\mathbb{R}}}%
%BeginExpansion
{\textstyle\bigcup\limits_{b_{1},...,b_{n}\in\mathbb{R}}}
%EndExpansion
\operatorname{Im}\left(
%TCIMACRO{\tsum \limits_{j=1}^{n}}%
%BeginExpansion
{\textstyle\sum\limits_{j=1}^{n}}
%EndExpansion
b_{j}T_{\zeta_{j}}\right)  \right)  \\
&  \leq\tau_{0}\cdot\tau\cdot\mathfrak{c}\\
&  <\dim F\\
&  =card\left(  F\right)  .
\end{align*}
Consider%
\begin{equation}
v\in F-%
%TCIMACRO{\tbigcup \limits_{n\in\mathbb{N}}}%
%BeginExpansion
{\textstyle\bigcup\limits_{n\in\mathbb{N}}}
%EndExpansion
\text{ }%
%TCIMACRO{\tbigcup \limits_{\zeta_{1},...,\zeta_{n}\in\Omega}}%
%BeginExpansion
{\textstyle\bigcup\limits_{\zeta_{1},...,\zeta_{n}\in\Omega}}
%EndExpansion
\text{ }%
%TCIMACRO{\tbigcup \limits_{b_{1},...,b_{n}\in\mathbb{K}}}%
%BeginExpansion
{\textstyle\bigcup\limits_{b_{1},...,b_{n}\in\mathbb{K}}}
%EndExpansion
\operatorname{Im}\left(
%TCIMACRO{\tsum \limits_{j=1}^{n}}%
%BeginExpansion
{\textstyle\sum\limits_{j=1}^{n}}
%EndExpansion
b_{j}T_{\zeta_{j}}\right)  .\label{qwt}%
\end{equation}
For each positive integer $n$, $\left(  b_{1},...,b_{n}\right)  \neq\left(
0,...,0\right)  $ and $\zeta_{1},...,\zeta_{n}\in\Omega$, choose normalized
vectors
\[
v_{1}^{(b_{1}...b_{n})\left(  \zeta_{1},...,\zeta_{n}\right)  },v_{2}%
^{(b_{1}...b_{n})\left(  \zeta_{1},...,\zeta_{n}\right)  },...
\]
such that
\[
\lim_{i\rightarrow\infty}\left\Vert \left(
%TCIMACRO{\tsum \limits_{j=1}^{n}}%
%BeginExpansion
{\textstyle\sum\limits_{j=1}^{n}}
%EndExpansion
b_{j}T_{\zeta_{j}}\right)  v_{i}^{(b_{1}...b_{n})\left(  \zeta_{1}%
,...,\zeta_{n}\right)  }\right\Vert =\infty.
\]
Let%
\[
S=span\{v_{i}^{(b_{1}...b_{n})\left(  \zeta_{1},...,\zeta_{n}\right)  }%
:i\in\mathbb{N}\text{ and }\left(  b_{1},...,b_{n}\right)  \in\mathbb{R}%
^{n}\setminus\{(0,...,0)\}\text{ and }\zeta_{1},...,\zeta_{n}\in\Omega\}.
\]
Note that%
\[
\dim S\leq card\left(  \Omega\right)  \cdot\mathfrak{c}<\min\{\dim E,\dim
F\}\leq\dim\left(  E\right)  .
\]
Let%
\[
\{w_{i}:i\in\Lambda\}
\]
be LI, normalized, such that%
\[
\left(  span\{w_{i}:i\in\Lambda\}\right)  \oplus S=E.
\]
Let%
\[
V:=span\{w_{i}:i\in\Lambda\}.
\]
Note that 
\[
\left\{
\begin{array}
[c]{c}%
\dim\left(  E\right)  =\dim V+\dim S\\
\dim S<\dim E,
\end{array}
\right.
\]
and thus%
\[
\dim V=\dim\left(  E\right)  .
\]
By (the proof of) Corollary \ref{r2} there is
\[
\{u_{\eta}:V\rightarrow F:\eta\in\Pi\}
\]
LI, with
\[
\bigcup\limits_{\eta\in\Pi}\operatorname{Im}u_{\eta}\subset span\{v\}
\]
and
\[
card\left(  \Pi\right)  =2^{\lambda}%
\]
such that
\[
span\left(  \{u_{\eta}:V\rightarrow F:\eta\in\Pi\}\right)  \subset
\mathcal{NL}_{\tau_{0}}\left(  V;F\right)  \cup\{0\}.
\]
Let $\mathcal{A}$ be a basis of $S$ and define $\widetilde{u_{\eta}%
}:E\rightarrow F$ by%
\[
\widetilde{u_{\eta}}(x)=\left\{
\begin{array}
[c]{c}%
u_{\eta}(x)\text{ if }x\in V\\
0\text{, if }x\in\mathcal{A}%
\end{array}
.\right.  \text{ }%
\]
Note that the set $\left\{  \widetilde{u_{\eta}}:\eta\in\Pi\right\}  $ is LI
and
\[
card\left\{  \widetilde{u_{\eta}}:\eta\in\Pi\right\}  =2^{\lambda}.
\]
Also,
\[
span\left\{  \widetilde{u_{\eta}}:\eta\in\Pi\right\}  \subset\mathcal{NL}%
_{\tau_{0}}\left(  E;F\right)  \cup\{0\}.
\]
Note also that we can include $\{T_{\zeta}:\zeta\in\Omega\}$ to this vector
space without loosing our properties. In fact, first note that%
\[
\left\{  T_{\zeta},\widetilde{u_{\eta}}:\eta\in\Pi\text{ and }\zeta\in
\Omega\right\}
\]
is LI. In fact, if
\[
a_{0}^{(1)}T_{1}+...+a_{0}^{(n)}T_{n}+a_{1}\widetilde{u_{\alpha_{1}}}%
+\cdots+a_{k}\widetilde{u_{\alpha_{k}}}=0,
\]
since, for all $x\in E$,
\[
a_{0}^{(1)}T_{1}(x)+...+a_{0}^{(n)}T_{n}(x)\in span\left\{
%TCIMACRO{\tbigcup \limits_{n\in\mathbb{N}}}%
%BeginExpansion
{\textstyle\bigcup\limits_{n\in\mathbb{N}}}
%EndExpansion
\text{ }%
%TCIMACRO{\tbigcup \limits_{\zeta_{1},...,\zeta_{n}\in\Omega}}%
%BeginExpansion
{\textstyle\bigcup\limits_{\zeta_{1},...,\zeta_{n}\in\Omega}}
%EndExpansion
\text{ }%
%TCIMACRO{\tbigcup \limits_{b_{1},...,b_{n}\in\mathbb{K}}}%
%BeginExpansion
{\textstyle\bigcup\limits_{b_{1},...,b_{n}\in\mathbb{K}}}
%EndExpansion
\operatorname{Im}\left(
%TCIMACRO{\tsum \limits_{j=1}^{n}}%
%BeginExpansion
{\textstyle\sum\limits_{j=1}^{n}}
%EndExpansion
b_{j}T_{\zeta_{j}}\right)  \right\}
\]
and%
\[
a_{1}\widetilde{u_{\alpha_{1}}}(x)+\cdots+a_{k}\widetilde{u_{\alpha_{k}}%
}(x)\in span\{v\},
\]
then by (\ref{qwt}) we have%
\[
\left\{
\begin{array}
[c]{c}%
a_{0}^{(1)}T_{\zeta_{1}}(x)+...+a_{0}^{(n)}T_{\zeta_{n}}(x)=0\\
a_{1}\widetilde{u_{\alpha_{1}}}(x)+\cdots+a_{k}\widetilde{u_{\alpha_{k}}}(x)=0
\end{array}
\right.
\]
for all $x\in E.$ Hence%
\[
a_{0}^{(1)}=\cdots=a_{0}^{(n)}=a_{1}=\cdots=a_{k}=0.
\]
Now note that%
\[
span\left\{  T_{\zeta},\widetilde{u_{\eta}}:\eta\in\Pi\text{ and }\zeta
\in\Omega\right\}  \subset\mathcal{NL}_{\tau_{0}}\left(  E;F\right)
\cup\{0\}.
\]
In fact, a typical element of $span\left\{  T_{\zeta},\widetilde{u_{\eta}%
}:\eta\in\Pi\text{ and }\zeta\in\Omega\right\}  $ is written as one of the
following ways:
\[
\left\vert
\begin{array}
[c]{l}%
R_{1}=a_{0}^{(1)}T_{\zeta_{1}}+\cdots+a_{0}^{(n)}T_{\zeta_{n}}+a_{1}%
\widetilde{u_{\alpha_{1}}}+\cdots+a_{k}\widetilde{u_{\alpha_{k}}},\\
R_{2}=a_{1}\widetilde{u_{\alpha_{1}}}+\cdots+a_{k}\widetilde{u_{\alpha_{k}}%
},\\
R_{3}=a_{0}^{(1)}T_{\zeta_{1}}+\cdots+a_{0}^{(n)}T_{\zeta_{n}},
\end{array}
\right.
\]
with $a_{0}^{(1)},...,a_{0}^{(n)},a_{1},...,a_{k}\neq0.$

For $R_{1}$ note that for $x\in\mathcal{A}$ we have
\begin{align*}
\left\Vert R_{1}(x)\right\Vert  &  =\left\Vert a_{0}^{(1)}T_{\zeta_{1}%
}(x)+\cdots+a_{0}^{(n)}T_{\zeta_{n}}(x)+a_{1}\widetilde{u_{\alpha_{1}}%
}(x)+\cdots+a_{k}\widetilde{u_{\alpha_{k}}}(x)\right\Vert \\
&  =\left\Vert a_{0}^{(1)}T_{\zeta_{1}}(x)+\cdots+a_{0}^{(n)}T_{\zeta_{n}%
}(x)\right\Vert
\end{align*}
and
\[
\sup_{0\neq x\in S}\left\Vert R_{1}\left(  \frac{x}{\left\Vert x\right\Vert
}\right)  \right\Vert =\sup_{0\neq x\in S}\left\Vert a_{0}^{(1)}T_{\zeta_{1}%
}\left(  \frac{x}{\left\Vert x\right\Vert }\right)  +\cdots+a_{0}%
^{(n)}T_{\zeta_{n}}\left(  \frac{x}{\left\Vert x\right\Vert }\right)
\right\Vert =\infty.
\]
For $R_{2}$ note that
\[
\sup_{0\neq x\in V}\left\Vert a_{1}\widetilde{u_{\alpha_{1}}}\left(  \frac
{x}{\left\Vert x\right\Vert }\right)  +\cdots+a_{k}\widetilde{u_{\alpha_{k}}%
}\left(  \frac{x}{\left\Vert x\right\Vert }\right)  \right\Vert =\sup_{0\neq
x\in V}\left\Vert a_{1}u_{\alpha_{1}}\left(  \frac{x}{\left\Vert x\right\Vert
}\right)  +\cdots+a_{k}u_{\alpha_{k}}\left(  \frac{x}{\left\Vert x\right\Vert
}\right)  \right\Vert =\infty.
\]
For $R_{3}$ the argument is the same of $R_{1}$.
\end{proof}

Using the previous lemma we can prove:

\begin{theorem}
\label{333}Let $E,F$ be normed spaces and
\[
\mathfrak{c}<\dim E<\dim F.
\]
The set $\mathcal{NL}\left(  ^{m}E;F\right)  $ is $\left(  \gamma,2^{\dim
E}\right)  $-lineable for every $\gamma<\dim E$.
\end{theorem}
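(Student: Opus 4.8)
The plan is to reduce the multilinear statement to the linear one and then to recognize the result as essentially a special case of Lemma~\ref{xc}, obtained by choosing the truncation parameter $\tau_{0}$ as large as the hypotheses permit. First I would pass from $m$-linear to linear operators exactly as in the earlier corollaries. Writing $E':=\otimes_{\pi}^{m}E$, Lemma~\ref{090} gives $\dim E'=\dim E$, so the hypotheses $\mathfrak{c}<\dim E<\dim F$ read $\mathfrak{c}<\dim E'<\dim F$ and $2^{\dim E'}=2^{\dim E}$. Hence it suffices to prove that $\mathcal{NL}\left(E';F\right)$ is $\left(\gamma,2^{\dim E'}\right)$-lineable for every $\gamma<\dim E'$, after which the conclusion for $\mathcal{NL}\left(^{m}E;F\right)$ follows immediately from Lemma~\ref{787}. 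So I may assume $m=1$.

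The key observation is that the strict inequality $\dim E<\dim F$ forces \emph{every} discontinuous operator into a truncated class. Indeed, for any $T\in\mathcal{NL}\left(E;F\right)$ the image $T(E)$ is a linear quotient of $E$, so $\dim\left(T(E)\right)\leq\dim E<\dim F$. Setting $\tau_{0}:=\dim E$, which is an infinite cardinal (here I use $\dim E>\mathfrak{c}$) satisfying $\tau_{0}<\dim F$, this says precisely that
\[
\mathcal{NL}_{\tau_{0}}\left(E;F\right)=\mathcal{NL}\left(E;F\right).
\]
With this identification in hand, I would simply invoke Lemma~\ref{xc}: its remaining hypotheses all hold, since $\dim E>\mathfrak{c}$ is assumed, $\dim F>\dim E>\mathfrak{c}$, and $\tau_{0}=\dim E<\dim F$. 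The lemma then yields that $\mathcal{NL}_{\tau_{0}}\left(E;F\right)$, and therefore $\mathcal{NL}\left(E;F\right)$, is $\left(\tau,2^{\dim E}\right)$-lineable for every $\tau<\min\{\dim E,\dim F\}=\dim E$, which is exactly the desired statement with $\tau=\gamma$.

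The step requiring the most care, and where I expect the only genuine obstacle, is verifying that the maximal choice $\tau_{0}=\dim E$ remains compatible with the cardinality estimate that drives the proof of Lemma~\ref{xc}. There one needs the union of the images of all finite linear combinations of the given $\gamma$-dimensional family to have cardinality strictly below $\operatorname{card}(F)=\dim F$, so that an ``escaping'' vector $v$ can be chosen. With $\tau_{0}=\dim E$ this bound becomes $\tau_{0}\cdot\gamma\cdot\mathfrak{c}=\dim E$, and because $\gamma<\dim E$ and $\mathfrak{c}<\dim E$ the product collapses to $\dim E$, which is still strictly smaller than $\dim F$. This is the one point at which the strict inequality $\dim E<\dim F$ (rather than $\leq$) is truly needed, and once it is confirmed the argument goes through verbatim as in Lemma~\ref{xc}.
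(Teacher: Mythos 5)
Your proposal is correct and follows essentially the same route as the paper: reduce to the linear case via Lemma~\ref{787} and Lemma~\ref{090}, observe that $\dim T(E)\leq\dim E<\dim F$ forces $\mathcal{NL}\left(\otimes_{\pi}^{m}E;F\right)=\mathcal{NL}_{\tau_{0}}\left(\otimes_{\pi}^{m}E;F\right)$ with $\tau_{0}=\dim E$, and then apply Lemma~\ref{xc}. The closing verification that the cardinality estimate in Lemma~\ref{xc} survives the maximal choice of $\tau_{0}$ is a sensible extra check but does not change the argument.
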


\begin{proof}
By Lemma \ref{787} we know that the statement is equivalent to prove that
$\mathcal{NL}\left(  \otimes_{\pi}^{m}E;F\right)  $ is $\left(  \tau,2^{\dim
E}\right)  $-lineable for every $\tau<\dim E.$ Since
\[
\dim\left(  \otimes_{\pi}^{m}E\right)  =\dim E<\dim F,
\]
and since $\dim T(E)\leq\dim E,$ using the notation of the previous lemma with
$\tau_{0}=\dim\otimes_{\pi}^{m}E$, we have%
\[
\mathcal{NL}\left(  \otimes_{\pi}^{m}E;F\right)  =\mathcal{NL}_{\tau_{0}%
}\left(  \otimes_{\pi}^{m}E;F\right)  .
\]
So, in this case, the previous lemma tells us that $\mathcal{NL}\left(
\otimes_{\pi}^{m}E;F\right)  $ is $\left(  \tau,2^{\dim E}\right)  $-lineable
for
\[
\tau<\min\{\dim\otimes_{\pi}^{m}E,\dim F\}.
\]
Since
\[
\min\{\dim\otimes_{\pi}^{m}E,\dim F\}=\dim E,
\]
we conclude that $\mathcal{NL}\left(  \otimes_{\pi}^{m}E;F\right)  $ is
$\left(  \tau,2^{\dim E}\right)  $-lineable for $\tau<\dim E$, and the proof
is done.
\end{proof}

\begin{remark}
If $\dim F>2^{\dim E}$ we can use the second part of the proof of Corollary
\ref{r2} and adapt the proof of Lemma \ref{xc} to prove that $\mathcal{NL}%
_{\tau_{0}}\left(  E;F\right)  $ is $\left(  \tau,\dim F\right)  $-lineable;
and using this result it is possible to adapt the proof of the previous
theorem to show that $\mathcal{NL}\left(  ^{m}E;F\right)  $ is $\left(
\gamma,\dim F\right)  $-lineable for every $\gamma<\dim E$.
\end{remark}

\section{Lineability \& Grothendieck's inequality}

Let $\mathbb{K}=\mathbb{R}$ or $\mathbb{C}$ and $E$ and $F$ be Banach spaces
over $\mathbb{K}.$ Recall that a continuous linear operator $T:E\rightarrow F$
is \emph{absolutely summing} when $\left(  T\left(  x_{j}\right)  \right)
_{j=1}^{\infty}\in\ell_{1}(F)$ whenever $%
%TCIMACRO{\tsum \limits_{j}}%
%BeginExpansion
{\textstyle\sum\limits_{j}}
%EndExpansion
\left\vert \varphi(x_{j})\right\vert <\infty$ for all continuous linear
functionals $\varphi:E\rightarrow\mathbb{K}$.  We denote the space of all absolutely summing
operators from $E$ to $F$ by $\Pi_{1}\left(  E;F\right)  $.
Grothendieck's inequality tells
us that every continuous linear operator $T:\ell_{1}\rightarrow H$ is
absolutely summing whenever $H$ is an infinite-dimensional Hilbert space. In
addition, Lindenstrauss and Pelczynski \cite{lin} proved that if $E$ is an
infinite dimensional Banach space with unconditional Schauder basis and every
continuous linear operator $T:E\rightarrow H$ is absolutely summing, then
$E=\ell_{1}.$ These results motivated the study whether  the set $\mathcal{L}\left(  E;F\right)
\setminus\Pi_{1}\left(  E;F\right)$ is lineable. In \cite{diogo} it was proved under some conditions on $E$ and $F$ related to the existence of unconditional basis, that $\mathcal{L}\left(  E;F\right)
\setminus\Pi_{1}\left(  E;F\right)$  is $\aleph_0$-lineable, where $\aleph_0$ is the cardinality of $\mathbb N$. This result was improved in \cite{kk}, where the conditions were weakened. Former results  on the lineability of  the set of bounded linear and non-absolutely summing operators  in certain situations can be found in \cite{PuSe}.

 In this section we show that if $\dim H>\dim E$ then the set of
continuous non-absolutely summing operators from $E$ to $H$ is void or
$\left(  \alpha,card(\Gamma)\right)  $-lineable for all $\alpha<card(\Gamma)$,
where $H=\ell_{2}(\Gamma)$. In particular, we provide a partial answer to
Problem 2.3 of \cite{diogo} (see also \cite{kk}): Under what circumstances is  $\mathcal{L}\left(  E;F\right)
\setminus\Pi_{1}\left(  E;F\right)$ $\mu$-lineable for $\mu>\aleph_0$?

\begin{lemma}
Let $E$ be an infinite-dimensional Banach space and $H$ be an
infinite-dimensional Hilbert space. If $\mathcal{L}\left(  E;H\right)
\setminus\Pi_{1}\left(  E;H\right)  $ is non-void, then it is $card(\Gamma
)$-lineable, where $H=\ell_{2}(\Gamma)$.
\end{lemma}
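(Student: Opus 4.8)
The plan is to manufacture $card(\Gamma)$ many non-absolutely summing operators with pairwise \emph{orthogonal} ranges, starting from a single one supplied by the hypothesis. Fix $T_{0}\in\mathcal{L}\left(E;H\right)\setminus\Pi_{1}\left(E;H\right)$; since the zero operator is trivially absolutely summing, $T_{0}\neq0$. As $\Gamma$ is infinite, $card(\Gamma)\cdot card(\Gamma)=card(\Gamma)$, so one may partition $\Gamma=\bigsqcup_{\lambda\in\Lambda}\Gamma_{\lambda}$ into pairwise disjoint blocks with $card(\Lambda)=card(\Gamma)$ and $card(\Gamma_{\lambda})=card(\Gamma)$ for every $\lambda$. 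For each $\lambda$, the closed subspace $\ell_{2}(\Gamma_{\lambda})\subset H=\ell_{2}(\Gamma)$ is isometrically isomorphic to $H$ itself; I would pick an isometric embedding $\phi_{\lambda}:H\rightarrow H$ with range $\ell_{2}(\Gamma_{\lambda})$ and set $T_{\lambda}:=\phi_{\lambda}\circ T_{0}\in\mathcal{L}\left(E;H\right)$, which is bounded with $\Vert T_{\lambda}\Vert=\Vert T_{0}\Vert$.

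First I would check that each $T_{\lambda}$ fails to be absolutely summing, using the ideal property of $\Pi_{1}$ (stability under composition with bounded operators). Letting $P_{\lambda}:H\rightarrow\ell_{2}(\Gamma_{\lambda})$ be the orthogonal projection and $\phi_{\lambda}^{-1}$ the bounded inverse of $\phi_{\lambda}$ on its range, the bounded operator $\phi_{\lambda}^{-1}P_{\lambda}$ satisfies $\phi_{\lambda}^{-1}P_{\lambda}T_{\lambda}=T_{0}$. Were $T_{\lambda}$ absolutely summing, so would be $T_{0}$, a contradiction; hence $T_{\lambda}\in\mathcal{L}\left(E;H\right)\setminus\Pi_{1}\left(E;H\right)$.

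The heart of the argument is to show that every nonzero finite linear combination $S=\sum_{j=1}^{k}a_{j}T_{\lambda_{j}}$ (with distinct $\lambda_{j}$ and not all $a_{j}$ zero) again lies in $\mathcal{L}\left(E;H\right)\setminus\Pi_{1}\left(E;H\right)$, and that the $T_{\lambda}$ are linearly independent. Both follow from the orthogonality of the blocks: for $x\in E$ the vector $T_{\lambda_{j}}(x)=\phi_{\lambda_{j}}(T_{0}(x))$ is supported on $\Gamma_{\lambda_{j}}$, so the summands sit in mutually orthogonal subspaces and $P_{\lambda_{1}}S=a_{1}T_{\lambda_{1}}$. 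Choosing an index with $a_{1}\neq0$, if $S$ were absolutely summing then $a_{1}T_{\lambda_{1}}=P_{\lambda_{1}}S$ would be too, contradicting the previous step; thus $S$ is non-absolutely summing (and bounded, being a finite combination of bounded maps). The same identity with $S=0$ gives $a_{j}T_{\lambda_{j}}=0$, whence $a_{j}=0$ for all $j$ since each $T_{\lambda_{j}}\neq0$, establishing linear independence. Consequently $span\{T_{\lambda}:\lambda\in\Lambda\}$ is a subspace of dimension $card(\Lambda)=card(\Gamma)$ contained in $\left(\mathcal{L}\left(E;H\right)\setminus\Pi_{1}\left(E;H\right)\right)\cup\{0\}$, which is exactly $card(\Gamma)$-lineability.

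The one point demanding care — and the main obstacle to a naive approach — is that an arbitrary linear combination might \emph{a priori} mix the non-summing behaviours and cancel them out; the orthogonal-block construction is precisely what prevents this, since post-composing with the fixed bounded projection $P_{\lambda_{1}}$ isolates a single summand $a_{1}T_{\lambda_{1}}$ whose failure to be summing is already known. By contrast, the cardinal arithmetic and the boundedness estimates are routine, so I expect no difficulty there.
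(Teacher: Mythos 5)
Your proposal is correct and follows essentially the same route as the paper: partition $\Gamma$ into $card(\Gamma)$ blocks of full cardinality, transport the given non-summing operator into each block via the isometry $\ell_{2}(\Gamma_{\lambda})\cong\ell_{2}(\Gamma)$, and use the orthogonality of the blocks to keep the whole span (minus zero) outside $\Pi_{1}$. The only difference is presentational: the paper leaves the final step as ``a simple calculation,'' whereas you spell it out via the ideal property of $\Pi_{1}$ and the projections $P_{\lambda}$, which is exactly the intended argument.
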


\begin{proof}
We split $\Gamma=%
%TCIMACRO{\tbigcup \limits_{j\in\Gamma}}%
%BeginExpansion
{\textstyle\bigcup\limits_{j\in\Gamma}}
%EndExpansion
\Gamma_{j}$ as a pairwise disjoint union with $card(\Gamma_{j})=card(\Gamma)$
for all $j$. Since $\ell_{2}\left(  \Gamma_{j}\right)  $ is isometrically
isomorphic to $\ell_{2}(\Gamma),$ we can find $T_{j}\in\mathcal{L}(E;\ell
_{2}(\Gamma_{j}))\setminus\Pi_{1}(E;\ell_{2}(\Gamma_{j})).$ Composing $T_{j}$
with the canonical inclusion $\ell_{2}(\Gamma_{j})\hookrightarrow\ell
_{2}(\Gamma)$ we obtain an operator (that we still denote by $T_{j}$)
satisfying $T_{j}\in\mathcal{L}(E;\ell_{2}(\Gamma))\setminus\Pi_{1}(E;\ell
_{2}(\Gamma)).$ Recalling that the sets $\Gamma_{j}$ are pairwise disjoint, a
simple calculation shows that $span\{T_{j}:j\in\Gamma\}\subset\mathcal{L}%
(E;\ell_{2}(\Gamma))\setminus\Pi_{1}(E;\ell_{2}(\Gamma)).$
\end{proof}

\begin{proposition}
Let $E$ be an infinite-dimensional Banach space and $H=\ell_{2}(\Gamma)$ and
$card\left(  \Gamma\right)  >\mathfrak{c}$, with $\dim E<card\left(
\Gamma\right)  $. If $\mathcal{L}\left(  E;H\right)  \setminus\Pi_{1}\left(
E;H\right)  $ is non-void, then it is $\left(  \alpha,card(\Gamma)\right)
$-lineable for all $\alpha<card(\Gamma)$.
\end{proposition}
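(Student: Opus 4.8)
The plan is to adapt the escape-to-fresh-room strategy of Lemma~\ref{xc} and Theorem~\ref{333} to the Hilbert-space/summing setting, using the orthogonal block structure of $\ell_2(\Gamma)$ in place of a single escaping vector $v$, and the operator-ideal property of $\Pi_1$ in place of unboundedness. Write $A:=\mathcal{L}(E;H)\setminus\Pi_1(E;H)$ and $\kappa:=card(\Gamma)$. The $\alpha$-lineability required by the definition is immediate: the previous lemma gives a $\kappa$-dimensional subspace inside $A\cup\{0\}$, any $\alpha$-dimensional subspace of which witnesses $\alpha$-lineability. It therefore remains to show that every subspace $W_\alpha=span\{T_\zeta:\zeta\in\Omega\}\subset A\cup\{0\}$ with $card(\Omega)=\alpha<\kappa$ can be enlarged to a subspace of dimension $\kappa$ still contained in $A\cup\{0\}$.

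First I would isolate the part of $\Gamma$ that the family $\{T_\zeta\}$ actually uses. Set $\Gamma':=\bigcup_{\zeta\in\Omega}\bigcup_{x\in E}\operatorname{supp}(T_\zeta(x))$. Each range $T_\zeta(E)$ is an algebraic subspace of $\ell_2(\Gamma)$ of dimension at most $\dim E$, hence is spanned by at most $\dim E$ vectors, each of countable support; consequently $card(\Gamma')\le\alpha\cdot\dim E\cdot\aleph_0=\max\{\alpha,\dim E\}<\kappa$. Writing $\Gamma'':=\Gamma\setminus\Gamma'$ we then get $card(\Gamma'')=\kappa$, while by construction every $T_\zeta$ has range inside the block $\ell_2(\Gamma')$, which sits orthogonally inside the decomposition $\ell_2(\Gamma)=\ell_2(\Gamma')\oplus\ell_2(\Gamma'')$.

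Next I would manufacture a large family living entirely in the fresh block. Since $card(\Gamma'')=\kappa$, $\ell_2(\Gamma'')$ is isometrically isomorphic to $\ell_2(\Gamma)$, so $\mathcal{L}(E;\ell_2(\Gamma''))\setminus\Pi_1(E;\ell_2(\Gamma''))$ is again non-void; applying the previous lemma to it yields a linearly independent family $\{u_\eta:\eta\in\Pi\}$ with $card(\Pi)=\kappa$ and $span\{u_\eta\}\subset(\mathcal{L}(E;\ell_2(\Gamma''))\setminus\Pi_1)\cup\{0\}$. Composing with the isometric inclusion $\ell_2(\Gamma'')\hookrightarrow\ell_2(\Gamma)$, I regard each $u_\eta$ as a continuous, non-absolutely-summing operator $E\to\ell_2(\Gamma)$ with range inside $\ell_2(\Gamma'')$.

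Finally I would check that $\{T_\zeta:\zeta\in\Omega\}\cup\{u_\eta:\eta\in\Pi\}$ is linearly independent and spans a subspace $W_\beta$ of dimension $\kappa$ contained in $A\cup\{0\}$; this $W_\beta$ contains $W_\alpha$ and finishes the proof. Linear independence is forced by orthogonality: in any vanishing combination $R=P+Q$ with $P\in span\{T_\zeta\}$ (range in $\ell_2(\Gamma')$) and $Q\in span\{u_\eta\}$ (range in $\ell_2(\Gamma'')$), the identity $\|R(x)\|^2=\|P(x)\|^2+\|Q(x)\|^2$ makes $P$ and $Q$ vanish separately, whence all coefficients are zero by independence of the two families. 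For membership, a nonzero $R=P+Q$ is continuous, being a sum of continuous operators; the genuinely delicate point --- and the main obstacle --- is to certify that $R$ is \emph{not} absolutely summing even when it mixes the two families. This is exactly where the operator-ideal property of $\Pi_1$ enters: if $Q\neq0$, then applying the orthogonal projection $\pi'':\ell_2(\Gamma)\to\ell_2(\Gamma'')$ (a norm-one operator) gives $\pi''\circ R=Q$, so $R\in\Pi_1$ would force $Q=\pi''\circ R\in\Pi_1$ by the ideal property, contradicting $Q\in\mathcal{L}(E;\ell_2(\Gamma''))\setminus\Pi_1$; and if $Q=0$ then $R=P$ is a nonzero element of $W_\alpha\subset A$. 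In either case $R\in A$, so $span\{T_\zeta,u_\eta\}\subset A\cup\{0\}$, as needed.
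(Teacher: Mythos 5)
Your proposal is correct and follows essentially the same route as the paper: isolate the (fewer than $card(\Gamma)$) coordinates of $\ell_{2}(\Gamma)$ actually occupied by the ranges of the given $\alpha$-dimensional subspace, and rerun the construction of the preceding lemma inside the complementary block of coordinates. You in fact supply more detail than the paper does at the one delicate point --- that a mixed combination remains non-absolutely-summing, via the orthogonal projection onto the fresh block and the ideal property of $\Pi_{1}$ --- which the paper leaves as ``repeat the arguments of the previous lemma.''
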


\begin{proof}
By the previous lemma we know that $\mathcal{L}\left(  E;H\right)
\setminus\Pi_{1}\left(  E;H\right)  $ is $card(\Gamma)$-lineable. Let $V$ be a
subspace of dimension $\alpha$ such that
\[
V\setminus\{0\}\subset\mathcal{L}\left(  E;H\right)  \setminus\Pi_{1}\left(
E;H\right)  .
\]
Note that the number of coordinates of $\ell_{2}(\Gamma)$ occupied by
$\left\{  T(x):T\in V\text{ and }x\in E\right\}  $ is not bigger than
\[
\beta:=card\left(  E\right)  \cdot\aleph_{0}\cdot card\left(  V\right)
<card(\Gamma).
\]
Let us denote by $\Lambda$ the set of all such coordinates. We can split%
\[
\Gamma=\left(
%TCIMACRO{\tbigcup \limits_{j\in\Gamma}}%
%BeginExpansion
{\textstyle\bigcup\limits_{j\in\Gamma}}
%EndExpansion
\Gamma_{j}\right)
%TCIMACRO{\tbigcup }%
%BeginExpansion
{\textstyle\bigcup}
%EndExpansion
\Lambda,
\]
with $card(\Gamma_{j})=card(\Gamma)$ for all $j$ and we can repeat the
arguments of the previous lemma to obtain a subspace $W$ with $\dim
(W)=card(\Gamma)$ and
\[
V\subset W\subset\left(  \mathcal{L}\left(  E;H\right)  \setminus\Pi
_{1}\left(  E;H\right)  \right)  \cup\{0\}.
\]

\end{proof}

\textbf{Acknowledgment.} Part of this paper was done while the second named author was visiting the third named author at Universitat de  Val\`encia. He thanks for the warm hospitality and nice research atmosphere during the visit.


\begin{thebibliography}{9}                                                                                                %


\bibitem {Aron}R. M. Aron, V. I. Gurariy, J. B. Seoane-Sepulveda, Lineability
and spaceability of sets of functions on $\mathbb{R}$, Proc. Amer. Math. Soc.
\textbf{133} (2005), 795--803.

\bibitem {book2}R. M. Aron, L. Bernal-Gonz\'{a}lez, D. Pellegrino, J. B.
Seoane-Sep\'{u}lveda, Lineability: The Search for Linearity in Mathematics,
Monographs and Research Notes in Mathematics. CRC Press, Boca Raton (2016).

\bibitem {diogo}G. Botelho, D. Diniz, D. Pellegrino, Lineability of the set of
bounded linear non-absolutely summing operators, J. Math. Anal. Appl.
\textbf{357} (2009), no. 1, 171--175.

\bibitem {da}V.V. F\'{a}varo, D. Pellegrino, D. Tomaz, Lineability and
spaceability: a new approach, Bull. Braz. Math Soc. \textbf{51} (2020), 27--46.

\bibitem {gamez}J.L. G\'{a}mez-Merino, G.A. Mu\~{n}oz-Fern\'{a}ndez, D.
Pellegrino, J.B. Seoane-Sep\'{u}lveda, Bounded and unbounded polynomials and
multilinear forms: Characterizing continuity, Linear Algebra Appl.
\textbf{436} (2012), 237--242.

\bibitem {kk}D. Kitson, R. Timoney, Operator ranges and spaceability, J. Math. Anal. Appl. \textbf{378} (2011), no. 2, 680–686.

\bibitem {lacey}H. Elton Lacey, The Hamel dimension of any separable Banach
space is $c$, The American Mathematical Monthly Vol. \textbf{80}, No. 3 (Mar., 1973),
p. 298.

\bibitem {lin}J. Lindenstrauss and A. Pelczynski, Absolutely summing operators
in $%
%TCIMACRO{\tciLaplace}%
%BeginExpansion
\mathcal{L}%
%EndExpansion
_{p}$-spaces and their applications, Studia Mathematica \textbf{29} (1968), 275--326.

\bibitem{PuSe} D. Puglisi and J.B. Seoane-Sepúlveda,  Bounded linear non-absolutely summing operators, J. Math. Anal. Appl. \textbf{338} (2008), no. 1, 292--298.

\bibitem {ryan}R. Ryan, Introduction to tensor products of Banach spaces.
Springer Monographs in Mathematics. Springer-Verlag London, Ltd., London,
2002. xiv+225 pp.
\end{thebibliography}
\end{document}